   \definecolor{labelkey}{gray}{.8}
   \definecolor{refkey}{gray}{.8}
\providecommand{\R}{\mathbb{R}}
\newcommand{\e}{\varepsilon}
\newcommand{\step}[1]{\medskip\noindent\textbf{Step #1. }}
\newcommand{\ignore}[1]{}
\newtheorem{definition}{Definition}
\newtheorem{theorem}{Theorem}
\newtheorem{lemma}{Lemma}
\newtheorem{assumption}{Assumption}
\author[M. Sch\"affner]{Mathias Sch\"affner}
\address{Technische Universit\"at Dortmund, Fakult\"at f\"ur Mathematik\\
 Vogelpothsweg 87,44227 Dortmund, Germany.}
\email{mathias.schaeffner@tu-dortmund.de}
\title[Higher integrability for variational integrals]{Higher integrability for variational integrals with non-standard growth} 
\begin{document}
\maketitle


\begin{abstract}
 We consider autonomous integral functionals of the form
 \begin{equation*}
\mathcal F[u]:=\int_\Omega f(D u)\,dx \quad\mbox{with}\quad u:\Omega\to\R^N,\, N\geq1,
\end{equation*}
where the convex integrand $f$ satisfies controlled $(p,q)$-growth conditions. We establish higher gradient integrability and partial regularity for minimizers of $\mathcal F$ assuming $\frac{q}p<1+\frac2{n-1}$, $n\geq3$. This improves earlier results valid under the more restrictive assumption $\frac{q}p<1+\frac2{n}$. 
\end{abstract}

\section{Introduction}
In this  note, we study regularity properties of local minimizers of integral functionals
\begin{equation}\label{eq:int}
\mathcal F[u]:=\int_\Omega f(D u)\,dx,
\end{equation}
where $\Omega\subset\R^n$, $n\geq3$, is a bounded domain, $u:\Omega\to\R^N$, $N\geq1$ and $f:\R^{N\times n}\to\R$ is a sufficiently smooth integrand satisfying $(p,q)$-growth of the form 
\begin{assumption}\label{ass:1} There exist $0<\nu\leq L<\infty$ such that $f\in C^2(\R^{N\times n})$ satisfies for all $z,\xi\in \R^{N\times n}$
\begin{equation}\label{ass}
\begin{cases}
\nu |z|^p\leq f(z)\leq L(1+|z|^q),&\\
\nu |z|^{p-2}|\xi|^2\leq \langle \partial^2 f(z)\xi,\xi\rangle\leq L(1+|z|^2)^\frac{q-2}2|\xi|^2.
\end{cases}
\end{equation}
\end{assumption}
Regularity properties of local minimizers of \eqref{eq:int} in the case $p=q$ are classical, see, e.g.,\ \cite{Giu}. A systematic regularity theory in the case $p<q$ was initiated by Marcellini in \cite{Mar89,Mar91}, see \cite{Min06} for an overview. In particular, Marcellini~\cite{Mar91} proves (among other things):
\begin{itemize}
\item[(A)] If $N=1$, $2\leq p<q$ and $\frac{q}p<1+\frac{2}{n-2}$ if $n\geq3$, then every local minimizer $u\in W_{\rm loc}^{1,q}(\Omega)$ of \eqref{eq:int} satisfies $u\in W_{\rm loc}^{1,\infty}(\Omega)$.
\end{itemize}
Local boundedness of the gradient implies that the non-standard growth of $f$ and $\partial^2f$ in \eqref{ass:1} becomes irrelevant and higher regularity (depending on the smoothness of $f$) follows by standard arguments, see e.g.\ \cite[Chapter~7]{Mar89}. However, the $W_{\rm loc}^{1,q}(\Omega)$-assumption on $u$ in (A) is problematic: a priori we can only expect that minimizers of \eqref{eq:int} are in the larger space $u\in W_{\rm loc}^{1,p}(\Omega)$.  Hence, a first important step in the regularity theory for integral functionals with $(p,q)$-growth is to improve gradient integrability for minimizers of \eqref{eq:int}. In \cite{ELM99}, Esposito, Leonetti and Mingione showed
\begin{itemize}
\item[(B)] If $2\leq p<q$ and $\frac{q}p<1+\frac{2}{n}$, then every local minimizer $u\in W_{\rm loc}^{1,p}(\Omega,\R^N)$ of \eqref{eq:int} satisfies $u\in W_{\rm loc}^{1,q}(\Omega,\R^N)$.
\end{itemize}
The combination of (A) and (B) yields unconditional Lipschitz-regularity for minimizers of \eqref{eq:int} in the scalar case under assumption $\frac{q}p<1+\frac2n$, see \cite{BM18} for a recent extension which includes in an optimal way a right-hand side. Only very recently, Bella and the author improved in \cite{BS19c} the results (A) and (B) (in the case $N=1$) in the sense that '$n$' in the assumption on the ratio $\frac{q}p$ can be replaced by '$n-1$' for $n\geq3$ (to be precise, \cite{BS19c,Mar89,Mar91} consider the non-degenerate version \eqref{eq:ass2} of \eqref{ass}). The argument in \cite{BS19c} relies on scalar techniques, e.g., Moser-iteration type arguments, and thus cannot be extended to the vectorial case $N>1$. In this paper, we extend the gradient integrability result of \cite{BS19c} to the vectorial case $N>1$. Before we state the results, we recall a standard notion of local minimizer in the context of functionals with $(p,q)$-growth
\begin{definition}
We call $u\in W_{\rm loc}^{1,1}(\Omega)$ a local minimizer of  $\mathcal F$ given in \eqref{eq:int} iff
\begin{equation*}
 f(Du)\in L^1_{\rm loc}(\Omega)
\end{equation*}
and
\begin{equation*}
 \int_{{\rm supp}\,\varphi}f(D u)\,dx\leq \int_{{\rm supp}\,\varphi}f(D u+D \varphi)\,dx
\end{equation*}
for any $\varphi\in W^{1,1}(\Omega,\R^N)$ satisfying ${\rm supp}\;\varphi\Subset \Omega$.
\end{definition}
The main result of the present paper is
\begin{theorem}\label{T:1}
Let $\Omega\subset \R^n$, $n\geq3$, and suppose Assumption~\ref{ass:1} is satisfied with $2\leq p<q<\infty$ such that 
\begin{equation}\label{eq:assq2}
\frac{q}{p}<1+\frac2{n-1}.
\end{equation}
Let $u\in W_{\rm loc}^{1,1}(\Omega,\R^N)$ be a local minimizer of the functional $\mathcal F$ given in \eqref{eq:int}. Then, $u\in W_{\rm loc}^{1,q}(\Omega,\R^N)$.
\end{theorem}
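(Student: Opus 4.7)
The plan is to follow the by-now classical three-step scheme for $(p,q)$-problems: (i) introduce a regularization $\mathcal F_\varepsilon[v]:=\mathcal F[v]+\varepsilon\int|Dv|^q\,dx$ whose minimizers $u_\varepsilon$ lie in $W^{1,q}$ and solve a smooth Euler--Lagrange system with coefficients of standard $q$-growth; (ii) derive an $\varepsilon$-uniform Caccioppoli-type estimate for a convex power of $|Du_\varepsilon|$; (iii) run a Sobolev iteration to produce $L^q_{\rm loc}$-bounds that survive $\varepsilon\downarrow 0$. Steps (i) and (iii) are structurally as in \cite{ELM99}; the entire improvement of the threshold from $1+\tfrac2n$ to $1+\tfrac2{n-1}$ has to come from a refinement of step (ii).

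For the Caccioppoli step I would differentiate the regularized Euler--Lagrange system and test, for each coordinate direction $s$, with $\partial_s u_\varepsilon\,\eta^2(1+|Du_\varepsilon|^2)^{\gamma}$ for a parameter $\gamma\geq 0$. Summing in $s$, the lower ellipticity bound in Assumption~\ref{ass:1} controls the $p$-growth part on the left, while the upper $q$-growth bound on $\partial^2 f$ yields, after standard absorption, an inequality of the form
\[
\int_{B_\rho}(1+|Du_\varepsilon|^2)^{\frac{p-2}{2}+\gamma}|D^2 u_\varepsilon|^2\eta^2\,dx\;\leq\;C\int_{B_R}(1+|Du_\varepsilon|^2)^{\frac{q}{2}+\gamma}|D\eta|^2\,dx.
\]
This is the same type of estimate that underlies (B), and iterating it with the usual $L^{\frac{2n}{n-2}}$-Sobolev embedding only delivers the threshold $\tfrac{q}{p}<1+\tfrac{2}{n}$.

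The main obstacle, and where the argument must depart from \cite{ELM99}, is precisely this embedding step. To extract the sharper gain $\tfrac{2}{n-1}$, I would formulate the Caccioppoli inequality on annular regions $B_R\setminus B_\rho$ with free inner and outer radii, and use a coarea/mean-value selection to pick a good radius $r\in(\rho,R)$ along which the spherical traces of the relevant quantities are controlled by their $L^1$-averages. On this sphere one may then apply the $(n-1)$-dimensional Sobolev embedding, saving one dimension compared with the bulk embedding. Equivalently, after writing the integrals in polar coordinates, one couples a one-dimensional Sobolev/interpolation inequality in the radial variable with the $(n-1)$-dimensional spherical embedding, so that the iteration gain becomes $\tfrac{n-1}{n-3}$ in place of $\tfrac{n}{n-2}$. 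This is the vectorial substitute for the Moser iteration carried out in the scalar setting of \cite{BS19c}, and it is where I expect the bulk of the technical work to lie; the correct weighted version of this spherical Sobolev inequality has to be chosen so that the exponent condition \eqref{eq:assq2} is exactly what makes one round of the iteration improve integrability by a definite factor.

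With the improved one-step estimate in hand, the remainder is routine: iterate $\gamma_k\to\infty$ on a nested family of shrinking radii in geometric progression, check that the accumulated constants remain summable, and arrive at an $\varepsilon$-uniform $L^q_{\rm loc}$-bound on $|Du_\varepsilon|$. A standard lower-semicontinuity and compactness argument identifies the limit with the given local minimizer $u$ and yields $u\in W^{1,q}_{\rm loc}(\Omega,\R^N)$. A secondary technical point to handle carefully is the justification of the differentiated Euler--Lagrange equation for $u_\varepsilon$ via difference quotients, since the second derivatives are only integrable against a degenerate weight; but this is by now a standard preliminary once the Caccioppoli inequality has been set up.
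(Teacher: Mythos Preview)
Your identification of the key mechanism---replacing the bulk Sobolev embedding by the $(n-1)$-dimensional embedding on spheres---is correct and is exactly where the improvement to $1+\tfrac{2}{n-1}$ comes from. The regularization and passage to the limit are also set up as in the paper. However, the specific Caccioppoli step you propose does not go through in the vectorial case, and this is the reason the paper departs from the Moser-type scheme of \cite{BS19c}.

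When you test the differentiated system with $\partial_s u_\varepsilon\,\eta^2(1+|Du_\varepsilon|^2)^{\gamma}$ for $\gamma>0$, the derivative of the weight produces the cross term
\[
2\gamma\sum_{s}\int \eta^2(1+|Du_\varepsilon|^2)^{\gamma-1}\bigl\langle \partial^2 f(Du_\varepsilon)\,\partial_s Du_\varepsilon,\ \partial_s u_\varepsilon\otimes \nabla|Du_\varepsilon|^2\bigr\rangle\,dx.
\]
For $N=1$ this term is nonnegative (it equals $2\gamma\int\eta^2(1+|Du|^2)^{\gamma-1}\langle\partial^2 f(Du)\xi,\xi\rangle$ with $\xi=\tfrac12\nabla|Du|^2$), which is precisely why Moser iteration works in the scalar setting. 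For $N\ge 2$ no such sign is available; estimating it via Cauchy--Schwarz for the bilinear form $\partial^2 f$ and the upper bound in Assumption~\ref{ass:1} leaves a remainder of order $\int\eta^2(1+|Du_\varepsilon|^2)^{\gamma+\frac{q-2}{2}}|D^2u_\varepsilon|^2$, which cannot be absorbed into the good term $\int\eta^2(1+|Du_\varepsilon|^2)^{\gamma+\frac{p-2}{2}}|D^2u_\varepsilon|^2$ whenever $q>p$. Hence the weighted Caccioppoli inequality you write down is not available for $\gamma>0$, and the subsequent iteration $\gamma_k\to\infty$ cannot be launched.

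The paper avoids this obstruction by never leaving $\gamma=0$. It uses only the basic Caccioppoli inequality of Lemma~\ref{L:caccio:w1q} and transfers the entire burden of the improvement to the cut-off: Lemma~\ref{L:optimcutoff} minimizes $\int|\nabla\eta|^2(1+|Dv|)^q$ over radial cut-offs, which converts the right-hand side into an integral in $r$ of $L^q$-norms on spheres $\partial B_r$; the spherical Sobolev inequality with exponent $\chi=\tfrac{n-1}{n-3}$ then bounds these by the second-derivative energy on the same sphere. This yields a \emph{nonlinear} self-referential inequality for the single quantity $J(\rho)=\int_{B_1}(1+f_\lambda(Dv))+\int_{B_\rho}|D(|Dv|^{\frac{p-2}{2}}Dv)|^2$, with exponent $\tfrac{\chi}{\chi-1}\tfrac{q-p}{p}<1$ under \eqref{eq:assq2}. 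The iteration is therefore over nested radii, not over powers, and converges because the exponent is subcritical. Your mean-value ``pick one good radius'' idea is a cousin of this, but the optimized radial cut-off is what makes the estimate clean enough to close without any $\gamma$-weight.
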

%
%

As mentioned above, higher gradient integrability is a first step in the regularity theory for integral functionals with $(p,q)$-growth, see \cite{CKP14,ELM02,ELM04,BF01} for further higher integrability results under $(p,q)$-conditions. Clearly, we cannot expect to improve from $W_{\rm loc}^{1,q}$ to $W_{\rm loc}^{1,\infty}$ for $N>1$, since this even fails in the classic setting $p=q$, see \cite{SY02}. Direct consequences of Theorem~\ref{T:1} are higher differentiability and a further improvement in gradient integrability in the form:
\begin{itemize}
 \item[(i)] (Higher differentiability). In the situation of Theorem~\ref{T:1} it holds $|\nabla u|^\frac{p-2}2 \nabla u\in W^{1,2}_{\rm loc}(\Omega)$, see Theorem~\ref{T:1b}.
 \item[(ii)] (Higher integrability). Sobolev inequality and (i) imply $\nabla u\in L^{\kappa p}_{\rm loc}(\Omega,\R^{N\times n})$ with $\kappa=\frac{n}{n-2}$. Note that $\kappa p>q$ provided $\frac{q}p<1+\frac{2}{n-2}$.
\end{itemize}
A further, on first glance less direct, consequence of Theorem~\ref{T:1} is partial regularity of minimizers of \eqref{eq:int}, see, e.g.,\,  \cite{AF94,BF01,Breit12,PS}, for partial regularity results under $(p,q)$-conditons. For this, we slightly strengthen the assumptions on the integrand and suppose

\begin{assumption}\label{ass:22} There exist $0<\nu\leq L<\infty$ such that $f\in C^2(\R^{N\times n})$ satisfies for all $z,\xi\in \R^{N\times n}$
\begin{equation}\label{eq:ass2}
\begin{cases}
\nu |z|^p\leq f(z)\leq L(1+|z|^q),&\\
\nu (1+|z|^2)^\frac{p-2}2|\xi|^2\leq \langle \partial^2 f(z)\xi,\xi\rangle\leq L(1+|z|^2)^\frac{q-2}2|\xi|^2.
\end{cases}
\end{equation}
\end{assumption}
In \cite{BF01}, Bildhauer and Fuchs prove partial regularity under Assumption~\ref{ass:22} with $\frac{q}p<1+\frac2n$ (\cite{BF01} contains also more general conditions including, e.g., the subquadratic case). Here we show
\begin{theorem}\label{T:2}
Let $\Omega\subset \R^n$, $n\geq3$, and suppose Assumption~\ref{ass:22} is satisfied with $2\leq p<q<\infty$ such that  \eqref{eq:assq2}. 
Let $u\in W_{\rm loc}^{1,1}(\Omega,\R^N)$ be a local minimizer of the functional $\mathcal F$ given in \eqref{eq:int}. Then, there exists an open set $\Omega_0\subset\Omega$ with $|\Omega\setminus\Omega_0|=0$ such that $\nabla u\in C^{0,\alpha}(\Omega_0,\mathbb R^{N\times n})$ for each $0<\alpha<1$.
\end{theorem}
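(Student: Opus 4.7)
The plan is to reduce Theorem~\ref{T:2} to the partial regularity scheme of Bildhauer and Fuchs~\cite{BF01} by using Theorem~\ref{T:1} as input. Since Assumption~\ref{ass:22} is strictly stronger than Assumption~\ref{ass:1}, Theorem~\ref{T:1} applies and gives $u\in W^{1,q}_{\mathrm{loc}}(\Omega,\R^N)$. Consequences (i) and (ii) stated after Theorem~\ref{T:1} then provide the higher differentiability $\a{\nabla u}^{(p-2)/2}\nabla u\in W^{1,2}_{\mathrm{loc}}(\Omega)$ and, via Sobolev embedding, $\nabla u\in L^{\kappa p}_{\mathrm{loc}}(\Omega,\R^{N\times n})$ with $\kappa=\tfrac{n}{n-2}$; since \eqref{eq:assq2} implies $\kappa p>q$, in particular $\nabla u\in L^{r}_{\mathrm{loc}}$ for some $r>q$.

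Under Assumption~\ref{ass:22} I would first upgrade (i) to the non-degenerate Uhlenbeck-type form $V(\nabla u):=(1+\a{\nabla u}^2)^{(p-2)/4}\nabla u\in W^{1,2}_{\mathrm{loc}}(\Omega)$, which follows from the same difference-quotient/Caccioppoli argument that produces (i), with the ellipticity lower bound $(1+\a{z}^2)^{(p-2)/2}$ from \eqref{eq:ass2} replacing $\a{z}^{p-2}$. Equipped with $V(\nabla u)\in W^{1,2}_{\mathrm{loc}}$ and the excess integrability $\nabla u\in L^{r}_{\mathrm{loc}}$, $r>q$, I would then feed the minimizer into the Bildhauer--Fuchs partial regularity argument: on a small ball $B_\rho(x_0)$ one freezes the coefficient $\partial^2 f((\nabla u)_{x_0,\rho})$, compares $u$ with the solution of the induced linear elliptic system sharing its boundary values, uses minimality together with the excess integrability to control the error, and iterates the resulting excess-decay estimate for the natural excess functional built from $V(\nabla u)$. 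This produces $C^{1,\alpha}$-regularity of $\nabla u$ for every $\alpha\in(0,1)$ on the set
\[
\Omega_0:=\Bigl\{x_0\in\Omega\;:\;\limsup_{\rho\downarrow0}\a{(\nabla u)_{x_0,\rho}}<\infty,\;\liminf_{\rho\downarrow0}\av_{B_\rho(x_0)}\a{V(\nabla u)-(V(\nabla u))_{x_0,\rho}}^2\dx=0\Bigr\},
\]
and $|\Omega\setminus\Omega_0|=0$ follows from Lebesgue differentiation together with $V(\nabla u)\in W^{1,2}_{\mathrm{loc}}$.

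The main obstacle is not conceptual but one of careful bookkeeping: in~\cite{BF01} the bound $q/p<1+2/n$ enters essentially at a single point, namely to secure the excess integrability $\nabla u\in L^{r}_{\mathrm{loc}}$ for some $r>q$, while the subsequent Caccioppoli inequality, $A$-harmonic comparison, and Campanato iteration rely only on this integrability together with the Uhlenbeck-type differentiability of $V(\nabla u)$. The delicate task is therefore to isolate that single input in their argument and to verify that, once it is supplied by Theorem~\ref{T:1} in the broader regime $q/p<1+2/(n-1)$, all the quantitative steps in~\cite{BF01} go through unchanged; once this is checked, Theorem~\ref{T:2} follows.
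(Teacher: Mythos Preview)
Your proposal is correct and follows essentially the same route as the paper: use Theorem~\ref{T:1} to secure $u\in W^{1,q}_{\rm loc}$, upgrade the Caccioppoli inequality to its non-degenerate form under Assumption~\ref{ass:22} (this is the paper's Lemma~\ref{L:caccio:w1q2}, your $V(\nabla u)\in W^{1,2}_{\rm loc}$), and then run the Bildhauer--Fuchs partial regularity machinery. One small discrepancy worth flagging: you describe the argument in~\cite{BF01} as a direct $A$-harmonic comparison, whereas the paper (faithfully following~\cite{BF01}) instead proves an $\varepsilon$-regularity lemma by blow-up and contradiction; inside that blow-up the relation on $q/p$ is invoked once more, but only in the weaker form $2q/p<2n/(n-2)$, i.e.\ $q/p<1+2/(n-2)$, which is implied by~\eqref{eq:assq2}, so your claim that the restrictive bound enters ``essentially at a single point'' is accurate.
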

%


\smallskip 

We do not know if \eqref{eq:assq2} in Theorem~\ref{T:1} and \ref{T:2}  is  optimal. Classic counterexamples in the scalar case $N=1$, see, e.g., \cite{G87,Mar91}, show that local boundedness of minimizers can fail if $\frac{q}p$ is to large depending on the dimension $n$. In fact, \cite[Theorem 6.1]{Mar91} and the recent boundedness result \cite{HS19} show that $\frac1p-\frac1q\leq \frac1{n-1}$ is the sharp condition ensuring local boundedness in the scalar case $N=1$ (for sharp results under additional structure assumptions, see, e.g., \cite{CMM15,FS93}). 

For non-autonomous functionals, i.e., $\int_\Omega f(x,Du)\,dx$, rather precise sufficiently \& necessary conditions are established in \cite{ELM04}, where the conditions on $p,q$ and $n$ has to be balanced with the (H\"older)-regularity in space of the integrand. However, if the integrand is sufficiently smooth in space, the regularity theory in the non-autonomous case essentially coincides with the autonomous case, see \cite{Breit12}.

Currently, regularity theory for non-autonomous integrands with non-standard growth, e.g.\ $p(x)$-Laplacian or double phase functionals are a very active field of research, see, e.g.,\  \cite{BCM18,CFK20,CM15,DM19,Filippis19,RR}.

Coming back to autonomous integral functionals: In \cite{CKP14} higher gradient integrability is proven assuming so-called 'natural' growth conditions, i.e., no upper bound assumption on $\partial^2f$, under the relation $\frac{q}p<1+\frac1{n-1}$. Moreover, in two dimensions we cannot improve the previous results on higher differentiability and partial regularity of, e.g., \cite{BF01,ELM99}, see \cite{BF03} for a full regularity result under Assumption~\ref{ass:22} with $n=2$ and $\frac{q}p<2$.
\bigskip

Let us briefly describe the main idea in the proof of Theorem~\ref{T:1} and from where our improvement compared to earlier results comes from. The main point is to obtain suitable a priori estimates for minimizers that may already be in  $W_{\rm loc}^{1,q}(\Omega,\R^N)$. The claim then follows by a known regularization and approximation procedure, see, e.g., \cite{ELM99}. For minimizers $v\in W_{\rm loc}^{1,q}(\Omega,\R^N)$ a Caccioppoli-type inequality 
\begin{equation}\label{intro:caccio}
 \int \eta^2 |D(|Dv|^\frac{p-2}2Dv)|^2\lesssim \int |\nabla \eta|^2(1+|Dv|^q)
\end{equation}
is valid for all sufficiently smooth cut-off functions $\eta$, see Lemma~\ref{L:caccio:w1q}. Very formally, the Caccioppoli inequality \eqref{intro:caccio} can be combined with Sobolev inequality and a simple interpolation inequality to obtain
\begin{equation*}
 \|Dv\|_{L^{\kappa p}}^p\lesssim \|D(|Dv|^\frac{p-2}2Dv)\|_{L^2}^2\lesssim \|Dv\|_{L^q}^q\lesssim \|Dv\|_{L^{\kappa p}}^{q\theta}\|Dv\|_{L^{p}}^{(1-\theta)q},
\end{equation*}
where $\theta=\frac{\frac1p-\frac1q}{\frac1{p}-\frac1{\kappa p}}\in(0,1)$ and $\kappa=\frac{n}{n-2}$. The $\|Dv\|_{L^{\kappa p}}$-factor on the right-hand side can be absorbed provided we have $\frac{q\theta}p<1$, but this is precisely the 'old' $(p,q)$-condition $\frac{q}p<1+\frac2n$, this type of argument was previously rigorously implemented in, e.g., \cite{BF01,ELM02}. Our improvement comes from choosing a cut-of function $\eta$ in \eqref{intro:caccio} that is optimized with respect to $v$, which  enables us to use Sobolev inequality on $n-1$-dimensional spheres wich gives the desired improvement, see Section~\ref{sec:proofT:1}. This idea has its origin in joint works with Bella \cite{BS19a,BS19b} on linear non-uniformly elliptic equations. 

\smallskip

With Theorem~\ref{T:1} at hand, we can follows the arguments of \cite{BF01} almost verbatim to prove Theorem~\ref{T:2}. In Section~\ref{sec:Leps}, we sketch (following \cite{BF01}) a corresponding $\e$-regularity result from which Theorem~\ref{T:2} follows by standard methods.


\section{Preliminary results}

In this section, we gather some known facts. We begin with a well-known higher differentiability result for minimizers of \eqref{eq:int} under the assumption that $u\in W^{1,q}_{\rm loc}(\Omega,\R^N)$:

\begin{lemma}\label{L:caccio:w1q}
Let $\Omega\subset \R^n$, $n\geq2$, and suppose Assumption~\ref{ass:1} is satisfied with $2\leq p<q<\infty$. Let $v\in W_{\rm loc}^{1,q}(\Omega,\R^N)$ be a local minimizer of the functional $\mathcal F$ given in \eqref{eq:int}. Then, $|Dv|^\frac{p-2}2Dv\in W_{\rm loc}^{1,2}(\Omega,\R^{N\times n})$ and there exists $c=c(\frac{L}\nu,n,N,p,q)\in[1,\infty)$ such that for every $Q\in\mathbb R^{N\times n}$
\begin{equation}\label{est:caccio}
 \int_\Omega \eta^2|D(|Dv|^\frac{p-2}2Dv)|^2\,dx \leq c\int_{\Omega}(1+|Dv|^2)^\frac{q-2}2 |D v-Q|^2|\nabla \eta|^2\,dx\quad\mbox{for all $\eta\in C_c^1(\Omega)$.}
\end{equation}
%

\end{lemma}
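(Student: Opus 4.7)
The plan is to establish \eqref{est:caccio} by the classical difference-quotient technique applied to the Euler--Lagrange system, working with $v - Q\,\cdot$ in place of $v$ so that the matrix $Q$ enters naturally. Since $v \in W^{1,q}_{\rm loc}(\Omega,\R^N)$ and the upper bound in \eqref{ass} gives $|\partial f(Dv)| \lesssim 1 + |Dv|^{q-1} \in L^{q/(q-1)}_{\rm loc}$, the weak Euler--Lagrange equation $\int_\Omega \partial f(Dv) : D\varphi \, dx = 0$ holds for every compactly supported $\varphi \in W^{1,q}(\Omega,\R^N)$.

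Fix $\eta \in C_c^1(\Omega)$, $s \in \{1,\ldots,n\}$, and $|h|$ smaller than $\mathrm{dist}(\mathrm{supp}\,\eta,\partial\Omega)$, and write $\tau_h^s w(x) := w(x+he_s)-w(x)$. Testing with $\varphi := \tau_{-h}^s\bigl[\eta^2 \tau_h^s(v - Q\,\cdot)\bigr]$ and using the discrete integration-by-parts identity $\int g : \tau_{-h}^s\psi = \int (\tau_h^s g) : \psi$ together with $\tau_h^s(Q\,\cdot) = hQe_s$ (constant) yields, after dividing by $h^2$,
$$\int \eta^2 \,\frac{\tau_h^s \partial f(Dv)}{h} : \frac{\tau_h^s Dv}{h}\,dx \;=\; - \int 2\eta \, \frac{\tau_h^s \partial f(Dv)}{h} : \nabla\eta \otimes \frac{\tau_h^s(v-Q\,\cdot)}{h}\,dx.$$
The representation $\tau_h^s\partial f(Dv) = M(x)\,\tau_h^s Dv$ with $M(x) := \int_0^1 \partial^2 f(Dv(x) + t\,\tau_h^s Dv(x))\,dt$ positive semidefinite rewrites both sides as $M$-inner products: the LHS integrand is $\eta^2\langle MX,X\rangle$ with $X := \tau_h^s Dv/h$, while the RHS integrand is $-2\eta\langle MX,Y\rangle$ with $Y := \nabla\eta \otimes \tau_h^s(v-Q\,\cdot)/h$.

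Cauchy--Schwarz in the $M$-inner product followed by Young's inequality with small $\epsilon > 0$ gives $2\eta|\langle MX, Y\rangle| \leq \epsilon \eta^2\langle MX,X\rangle + \epsilon^{-1}\langle MY, Y\rangle$. After absorbing the $\epsilon$-term into the LHS, the lower bound $\langle \partial^2 f(z)\xi,\xi\rangle \geq \nu|z|^{p-2}|\xi|^2$ combined with the standard algebraic inequality for $p \geq 2$ (see, e.g., \cite{Giu}) delivers, with $V_p(z) := |z|^{(p-2)/2}z$,
$$\eta^2\langle MX,X\rangle \;\gtrsim\; \eta^2\,\frac{|V_p(Dv(\cdot+he_s)) - V_p(Dv)|^2}{h^2},$$
while the upper bound $\langle \partial^2 f(z)\xi,\xi\rangle \leq L(1+|z|^2)^{(q-2)/2}|\xi|^2$ yields $\langle MY, Y\rangle \lesssim (1+|Dv|^2+|Dv(\cdot+he_s)|^2)^{(q-2)/2}|\nabla\eta|^2\,|\tau_h^s(v-Q\,\cdot)/h|^2$.

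Sending $h \to 0$ completes the argument: since $v \in W^{1,q}_{\rm loc}$, $\tau_h^s(v-Q\,\cdot)/h \to \partial_s v - Qe_s$ in $L^q_{\rm loc}$ and the weight lies uniformly in $L^{q/(q-2)}_{\rm loc}$, so H\"older's inequality and dominated convergence handle the right-hand side; uniform boundedness of the difference quotients of $V_p(Dv)$ on the left then forces $\partial_s V_p(Dv) \in L^2_{\rm loc}$ with the corresponding directional estimate, via Fatou and the standard characterization of $W^{1,2}$ through difference quotients. Summing over $s=1,\ldots,n$ assembles $|D V_p(Dv)|^2$ on the left and $\sum_s|\tau_h^s(v-Q\,\cdot)/h|^2 \to |Dv - Q|^2$ on the right, yielding \eqref{est:caccio}. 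The principal technical point is the weighted Young-inequality/absorption step: performing Cauchy--Schwarz in the $\partial^2 f$-inner product makes both the coercive $V_p$-difference on the left and the $(1+|Dv|^2)^{(q-2)/2}$-weight on the right fall out cleanly from the $(p,q)$-bounds on $\partial^2 f$, so no auxiliary algebraic identities linking $|Dv - Q|$ and $V_p(Dv)$ are needed and everything else is routine.
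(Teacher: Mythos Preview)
Your proof is correct and follows essentially the same route as the paper: test the Euler--Lagrange system with $\tau_{-h}^s\bigl[\eta^2\tau_h^s(v-Q\cdot)\bigr]$, rewrite the difference of $\partial f(Dv)$ via the averaged Hessian $M=\int_0^1\partial^2 f(Dv+t\tau_h^s Dv)\,dt$, absorb with Young's inequality in the $M$-inner product, invoke the elementary $V_p$-inequality for the lower bound and the $(q-2)$-growth of $\partial^2 f$ for the upper bound, and pass to the limit $h\to0$. The only cosmetic difference is that you phrase the absorption explicitly as Cauchy--Schwarz in the $M$-inner product, whereas the paper writes it directly as $|(II)|\le\tfrac12(I)+2(III)$.
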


The Lemma~\ref{L:caccio:w1q} is known, see e.g.\ \cite{BF01,ELM99,Mar91}. Since we did not find a precise reference for estimate \eqref{est:caccio}, we included a prove here following essentially the argument of \cite{ELM99}.

\begin{proof}[Proof of Lemma~\ref{L:caccio:w1q}]
Without loss of generality, we suppose $\nu=1$ the general case $\nu>0$ follows by replacing $f$ with $f/\nu$ (and thus $L$ with $L/\nu$). Throughout the proof, we write $\lesssim$ if $\leq$ holds up to a multiplicative constant depending only on $n,N,p$ and $q$. 

Thanks to the assumption $v\in W^{1,q}_{\rm loc}(\Omega,\R^N)$, the minimizer $v$ satisfies the Euler-Largrange equation
\begin{equation}\label{eq:euler}
 \int_\Omega \langle \partial f(D v), D \varphi\rangle\,dx=0\qquad\mbox{for all $\varphi\in W_0^{1,q}(\Omega,\R^N)$}
\end{equation}
(for this we use that the convexity and growth conditions of $f$ imply $|\partial f(z)|\leq c(1+|z|^{q-1})$ for some $c=c(L,n,N,q,)<\infty$). Next, we use the difference quotient method, to differentiate the above equation: For $s\in\{1,\dots,n\}$, we consider the difference quotient operator
\begin{equation*}
 \tau_{s,h}v:=\tfrac1h(v(\cdot+he_s)-v)\qquad\mbox{where $v\in L_{\rm loc}^1(\R^n,\R^N)$}.
\end{equation*}
Fix $\eta\in C_c^1(\Omega)$. Testing \eqref{eq:euler} with $\varphi:=\tau_{s,-h}(\eta^2(\tau_{s,h}(v-\ell_Q)))\in W_0^{1,q}(\Omega)$, where $\ell_Q(x)=Qx$, we obtain
\begin{align*}
 (I):=&\int_\Omega \eta^2 \langle \tau_{s,h}\partial f(D v),\tau_{s,h}D v\rangle \,dx\\
 =&-2\int_\Omega  \eta \langle\tau_{s,h}\partial f(D v), \tau_{s,h}(v-\ell_Q)\otimes \nabla \eta\rangle\,dx=:(II).
\end{align*}
Writing $\tau_{s,h}\partial f(D v)=\frac1h \partial f(D v+th\tau_{s,h}D v)\big|_{t=0}^{t=1}$, the fundamental theorem of calculus yields
\begin{align}\label{est:Lmar1}
 &\int_\Omega\int_0^1 \eta^2\langle \partial^2f(D v+th\tau_{s,h}D v))\tau_{s,h}D v,\tau_{s,h}D v\rangle\,dt\,dx=(I)\notag\\
 =&(II)=-2\int_\Omega\int_0^1  \eta \langle \partial^2f(D v+th\tau_{s,h}D v)\tau_{s,h}D v, (\tau_{s,h}v-Qe_s) \otimes \nabla \eta\rangle\,dt\,dx,
\end{align}
where we use $\tau_{h,s}\ell_Q=Qe_s$. Youngs inequality yields
\begin{equation}\label{est:Lmar2}
 |(II)|\leq \tfrac12(I)+2(III),
\end{equation}
where
\begin{equation*}
(III):=\int_\Omega\int_0^1\langle \partial^2f(D u+th\tau_{s,h}D u) (\tau_{s,h}v-Qe_s)\otimes \nabla \eta,(\tau_{s,h}v-Qe_s)\otimes \nabla \eta\rangle\,dt\,dx.
\end{equation*}
Combining \eqref{est:Lmar1}, \eqref{est:Lmar2} with the assumptions on $\partial^2f$, see \eqref{ass}, with the elementary estimate
$$
|\tau_{s,h}(|Dv|^{\frac{p-2}2}Dv)|^2\lesssim \int_0^1 |D v+th\tau_{s,h}D v|^\frac{p-2}2 |\tau_{s,h}D v|^2\,dt\
$$
for $h>0$ sufficiently small (see e.g. \cite[Lemma 3.4]{ELM99}), we obtain 
\begin{align}\label{est:Lmar3}
&\int_\Omega\eta^2|\tau_{s,h}(|Dv|^{\frac{p-2}2}Dv)|^2\,dx\notag\\
\lesssim&\int_\Omega\int_0^1 \eta^2|D v+th\tau_{s,h}D v|^\frac{p-2}2 |\tau_{s,h}D v|^2\,dt\,dx\leq (I)\notag\\
\leq& 4(III)\leq4L\int_\Omega\int_0^1 (1+|Dv+th\tau_{s,h}D v|^{q-2})|\nabla \eta|^2|\tau_{s,h}v-Qe_s|^2\,dt\,dx.
\end{align}
Estimate \eqref{est:Lmar3}, the fact  $v\in W_{\rm loc}^{1,q}(\Omega)$ and the arbitrariness of $\eta\in C_c^1(\Omega)$ and $s\in\{1,\dots,n\}$ yield $|Dv|^{\frac{p-2}2}Dv\in W_{\rm loc}^{1,2}(\Omega)$. Sending $h$ to zero in \eqref{est:Lmar3}, we obtain
\begin{equation*}
\int_\Omega\eta^2|\partial_s(|Dv|^{\frac{p-2}2}Dv)|^2\,dx\lesssim L\int_\Omega (1+|Dv|^{q-2})|\nabla \eta|^2|\partial_sv-Qe_s|^2\,dx
\end{equation*}
 the desired estimate \eqref{est:caccio} follows by summing over $s$.

\end{proof}

Next, we state a higher differentiability result under the more restrictive Assumption~\ref{ass:22} which will be used in the proof of Theorem~\ref{T:2}.
\begin{lemma}\label{L:caccio:w1q2}
Let $\Omega\subset \R^n$, $n\geq2$, and suppose Assumption~\ref{ass:22} is satisfied with $2\leq p<q<\infty$. Let $v\in W_{\rm loc}^{1,q}(\Omega,\R^N)$ be a local minimizer of the functional $\mathcal F$ given in \eqref{eq:int}. Then, $h:=(1+|Dv|^2)^\frac{p}4\in W_{\rm loc}^{1,2}(\Omega)$ and there exists $c=c(\frac{L}\nu,n,N,p,q)\in[1,\infty)$ such that for every $Q\in\mathbb R^{N\times n}$
\begin{equation}\label{est:caccio:2}
 \int_\Omega \eta^2|\nabla h|^2\,dx \leq c\int_{\Omega}(1+|Dv|^2)^\frac{q-2}2 |D v-Q|^2|\nabla \eta|^2\,dx\quad\mbox{for all $\eta\in C_c^1(\Omega)$.}
\end{equation}
\end{lemma}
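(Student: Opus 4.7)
The plan is to mimic the proof of Lemma~\ref{L:caccio:w1q} almost line by line, replacing the degenerate lower bound $|z|^{p-2}|\xi|^2$ on $\partial^2 f(z)$ by the non-degenerate one $(1+|z|^2)^{(p-2)/2}|\xi|^2$ from Assumption~\ref{ass:22}. Concretely, I would again test the Euler--Lagrange equation \eqref{eq:euler} with $\varphi:=\tau_{s,-h}(\eta^2\tau_{s,h}(v-\ell_Q))$, arrive at the identity $(I)=(II)$ and the Young bound $|(II)|\le\tfrac12(I)+2(III)$ exactly as in the previous proof. The integrals $(I)$ and $(III)$ are unchanged structurally; only the elementary inequality bounding them differs.

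The key elementary step is to relate the difference quotient of $h=(1+|Dv|^2)^{p/4}$ to the integrand of $(I)$. Since $F(z):=(1+|z|^2)^{p/4}$ satisfies
\begin{equation*}
|DF(z)|^2=\tfrac{p^2}{4}(1+|z|^2)^{p/2-2}|z|^2\lesssim (1+|z|^2)^{(p-2)/2},
\end{equation*}
the fundamental theorem of calculus along the segment $z_t:=Dv+th\tau_{s,h}Dv$ yields, for $h$ sufficiently small,
\begin{equation*}
|\tau_{s,h}h|^2\lesssim \int_0^1 (1+|z_t|^2)^{(p-2)/2}\,|\tau_{s,h}Dv|^2\,dt,
\end{equation*}
which is the precise analogue of the bound used in \cite[Lemma~3.4]{ELM99} and in the proof of Lemma~\ref{L:caccio:w1q}. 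Plugging this into the lower bound of $(I)$ coming from \eqref{eq:ass2} gives
\begin{equation*}
\int_\Omega \eta^2|\tau_{s,h}h|^2\,dx\lesssim (I)\le 4(III).
\end{equation*}

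For the upper integrand bound in $(III)$ I would use the upper half of \eqref{eq:ass2}, which is identical to the one in Assumption~\ref{ass:1}, to obtain
\begin{equation*}
(III)\le L\int_\Omega\!\int_0^1 (1+|z_t|^2)^{(q-2)/2}|\nabla\eta|^2|\tau_{s,h}v-Qe_s|^2\,dt\,dx.
\end{equation*}
Combining the last two displays, sending $h\to0$ (using $v\in W^{1,q}_{\rm loc}$ so that $\tau_{s,h}Dv\to\partial_sDv$ in $L^q_{\rm loc}$ and $\tau_{s,h}v\to\partial_sv$), and summing over $s\in\{1,\dots,n\}$ produces \eqref{est:caccio:2} and in particular $h\in W^{1,2}_{\rm loc}(\Omega)$.

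I do not expect a genuine obstacle: the only real new ingredient is the pointwise/difference-quotient bound for $F(z)=(1+|z|^2)^{p/4}$, which is a routine computation. The whole argument is an adaptation of the scheme already carried out in detail for Lemma~\ref{L:caccio:w1q}; in the write-up it would be natural to indicate only the inequality for $|\tau_{s,h}h|^2$ and refer to the previous proof for the rest.
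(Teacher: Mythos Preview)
Your approach is correct and runs along the same difference-quotient scheme the paper uses for Lemma~\ref{L:caccio:w1q}. The paper organizes the argument slightly differently: it first establishes $v\in W^{2,2}_{\rm loc}$ together with the Caccioppoli inequality $\int\eta^2(1+|Dv|^2)^{(p-2)/2}|D^2v|^2\lesssim\int(1+|Dv|^2)^{(q-2)/2}|Dv-Q|^2|\nabla\eta|^2$, and then deduces \eqref{est:caccio:2} from the chain-rule bound $|\nabla h|^2\lesssim(1+|Dv|^2)^{(p-2)/2}|D^2v|^2$, justifying the chain rule by a truncation $h_m=\Theta_m(|Dv|)$ and passing $m\to\infty$. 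Your shortcut of bounding $|\tau_{s,h}h|^2$ directly by the integrand of $(I)$ bypasses the truncation step entirely and is a bit cleaner; the paper's detour has the side benefit of recording $v\in W^{2,2}_{\rm loc}$ explicitly.

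One small slip to fix in the write-up: the assertion ``$\tau_{s,h}Dv\to\partial_sDv$ in $L^q_{\rm loc}$'' does not follow from $v\in W^{1,q}_{\rm loc}$ (it would require $Dv\in W^{1,q}_{\rm loc}$), and it is not what you actually need. For the right-hand side it suffices that $Dv(\cdot+he_s)\to Dv$ in $L^q_{\rm loc}$ (so $z_t\to Dv$) and $\tau_{s,h}v\to\partial_sv$ in $L^q_{\rm loc}$; for the left-hand side, the uniform $L^2$-bound on $\tau_{s,h}h$ yields $h\in W^{1,2}_{\rm loc}$ and weak lower semicontinuity gives the inequality for $\partial_s h$.
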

A variation of Lemma~\ref{L:caccio:w1q2} can be found in \cite{BF01} and we only sketch the proof.
\begin{proof}[Proof of Lemma~\ref{L:caccio:w1q2}]
With the same argument as in the proof of Lemma~\ref{L:caccio:w1q} but using \eqref{eq:ass2} instead of \eqref{ass}, we obtain $v\in W^{2,2}_{\rm loc}(\Omega,\R^N)$ and the Caccioppoli inequality
\begin{equation}\label{est:caccio:3}
 \int_\Omega \eta^2(1+|Dv|^2)^\frac{p-2}2|D^2v|^2\,dx \leq c\int_{\Omega}(1+|Dv|^2)^\frac{q-2}2 |D v-Q|^2|\nabla \eta|^2\,dx\quad\mbox{for all $\eta\in C_c^1(\Omega)$,}
\end{equation}
where $c=c(\frac{L}\nu,n,N,p,q)<\infty$. Formally, the chain-rule implies
\begin{equation}\label{formal:chainrule}
|\nabla h|^2\leq c(1+|Dv|^2)^\frac{p-2}2|D^2v|^2,
\end{equation}
where $c=c(n,p)<\infty$, and the claimed estimate \eqref{est:caccio:2} follows from \eqref{est:caccio:3} and \eqref{formal:chainrule}. In general, we are not allowed to use the chain rule, but the above reasoning can be made rigorous: Consider a truncated version $h_m$ of $h$, where $h_m:=\Theta_m(|Dv|)$ with 
\begin{equation*}
 \Theta_m(t):=\begin{cases}(1+t^2)^\frac{p}4&\mbox{if $0\leq t\leq m$}\\(1+m^2)^\frac{p}4&\mbox{if $t\geq m$}\end{cases}.
\end{equation*}
For $h_m$ we are allowed to use the chain-rule and \eqref{est:caccio:3} together with \eqref{formal:chainrule} with $h$ replaced by $h_m$ imply  \eqref{est:caccio:2} with $h$ replaced by $h_m$. The claimed estimate follows by taking the limit $m\to \infty$, see \cite[Proposition~3.2]{BF01} for details.
\end{proof}

The following technical lemma is contained in \cite{BS19c} (see also \cite[proof of Lemma 2.1, Step~1]{BS19a}) and plays a key role in the proof of Theorem~\ref{T:1}
\begin{lemma}[\cite{BS19c}, Lemma 3]\label{L:optimcutoff}
Fix $n\geq2$. For given $0<\rho<\sigma<\infty$ and $v\in L^1(B_{\sigma})$, consider
\begin{equation*}
 J(\rho,\sigma,v):=\inf\left\{\int_{B_{\sigma}}|v||\nabla \eta|^2\,dx \;|\;\eta\in C_0^1(B_{\sigma}),\,\eta\geq0,\,\eta=1\mbox{ in $B_\rho$}\right\}.
\end{equation*}
Then for every $\delta\in(0,1]$ 
\begin{equation}\label{1dmin}
 J(\rho,\sigma,v)\leq  (\sigma-\rho)^{-(1+\frac1\delta)} \biggl(\int_{\rho}^\sigma \left(\int_{\partial B_r} |v|\,d\mathcal H^{n-1}\right)^\delta\,dr\biggr)^\frac1\delta.
\end{equation}
\end{lemma}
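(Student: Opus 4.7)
The plan is to restrict the class of admissible cutoffs to radial ones $\eta(x)=\phi(|x|)$ with $\phi:[0,\sigma]\to[0,1]$ satisfying $\phi\equiv 1$ on $[0,\rho]$ and $\phi(\sigma)=0$. Writing $g(r):=\int_{\partial B_r}|v|\,d\mathcal{H}^{n-1}$, the coarea formula rewrites the functional as
\begin{equation*}
\int_{B_\sigma}|v||\nabla\eta|^2\,dx=\int_\rho^\sigma|\phi'(r)|^2\, g(r)\,dr,
\end{equation*}
so the problem reduces to a one-dimensional minimization. A standard mollification combined with a truncation near $\sigma$ (forcing $\phi=0$ on a thin annulus and then letting its width tend to zero) will produce an honest $C_0^1(B_\sigma)$ competitor from the Lipschitz $\phi$ constructed below.

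\textbf{Solving the 1D problem.} Motivated by the Euler--Lagrange equation $(g_\epsilon\phi')'=0$ of the weighted Dirichlet energy, introduce the regularization $g_\epsilon:=g+\epsilon$ with $\epsilon>0$, and set
\begin{equation*}
\phi(r):=1-c_\epsilon\int_\rho^r g_\epsilon(s)^{-1}\,ds,\qquad c_\epsilon:=\Bigl(\int_\rho^\sigma g_\epsilon^{-1}\,ds\Bigr)^{-1},
\end{equation*}
which satisfies $\phi(\rho)=1$ and $\phi(\sigma)=0$. Since $0\leq g\leq g_\epsilon$, the associated cost obeys
\begin{equation*}
\int_\rho^\sigma|\phi'|^2 g\,dr\;\leq\;\int_\rho^\sigma|\phi'|^2 g_\epsilon\,dr=c_\epsilon^2\int_\rho^\sigma g_\epsilon^{-1}\,dr=\Bigl(\int_\rho^\sigma g_\epsilon^{-1}\,dr\Bigr)^{-1}.
\end{equation*}

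\textbf{Closing the estimate.} To convert the lower bound on $\int g_\epsilon^{-1}$ into the claimed form, apply H\"older's inequality with exponents $(1+\delta)/\delta$ and $1+\delta$ (both admissible since $\delta>0$):
\begin{equation*}
\sigma-\rho=\int_\rho^\sigma g_\epsilon^{-\delta/(1+\delta)}g_\epsilon^{\delta/(1+\delta)}\,dr\leq\Bigl(\int_\rho^\sigma g_\epsilon^{-1}\,dr\Bigr)^{\delta/(1+\delta)}\Bigl(\int_\rho^\sigma g_\epsilon^\delta\,dr\Bigr)^{1/(1+\delta)}.
\end{equation*}
Raising to the power $(1+\delta)/\delta$ and rearranging gives $(\int g_\epsilon^{-1})^{-1}\leq(\sigma-\rho)^{-(1+1/\delta)}(\int g_\epsilon^\delta)^{1/\delta}$. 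Combined with the previous display and letting $\epsilon\downarrow 0$ — justified since $v\in L^1(B_\sigma)$ and $\delta\leq 1$ force $g^\delta\in L^1(\rho,\sigma)$ via H\"older, so monotone/dominated convergence applies — this yields \eqref{1dmin}. The only real technical obstacle is the approximation step that promotes the Lipschitz $\phi$ to a genuine $C_0^1(B_\sigma)$ competitor without degrading the bound; the analytic core is the transparent duality between minimizing $\int|\phi'|^2 g_\epsilon\,dr$ and H\"older's inequality.
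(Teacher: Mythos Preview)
Your argument is correct and follows essentially the same route as the paper: restrict to radial competitors, regularize the spherical weight by adding $\epsilon>0$, insert the explicit one-dimensional minimizer to obtain $(\int_\rho^\sigma g_\epsilon^{-1}\,dr)^{-1}$, and then apply H\"older (your exponents $(1+\delta)/\delta$, $1+\delta$ correspond exactly to the paper's choice $s=1+\delta$) before letting $\epsilon\downarrow0$. The paper additionally remarks that the one-dimensional infimum actually equals $(\int g_\epsilon^{-1})^{-1}$, but only the upper bound is needed, which is precisely what you establish.
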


For convenience of the reader we include a short proof of Lemma~\ref{L:optimcutoff}
\begin{proof}[Proof of Lemma~\ref{L:optimcutoff}]
Estimate \eqref{1dmin} follows directly by minimizing among radial symmetric cut-off functions. Indeed, we obviously have for every $\e\geq0$
\begin{equation*}
 J(\rho,\sigma,v)\leq \inf\left\{\int_{\rho}^\sigma \eta'(r)^2\left(\int_{\partial B_r}|v|+\e\right)\,dr \;|\;\eta\in C^1(\rho,\sigma),\,\eta(\rho)=1,\,\eta(\sigma)=0\right\}=:J_{{\rm 1d},\e}.
\end{equation*}
For $\e>0$, the one-dimensional minimization problem $J_{{\rm 1d},\e}$ can be solved explicitly and we obtain
\begin{equation}\label{1dmin:2}
J_{{\rm 1d},\e}=\biggl(\int_{\rho}^\sigma \biggl(\int_{\partial B_r}|v|\,d\mathcal H^{n-1}+\e\biggr)^{-1}\,dr\biggr)^{-1}.
\end{equation}
To see \eqref{1dmin:2},  we observe that using the assumption $v\in L^1(B_\sigma)$ and a simple approximation argument we can replace $\eta\in C^1(\rho,\sigma)$ with $\eta\in W^{1,\infty}(\rho,\sigma)$ in the definition of $J_{{\rm 1d},\e}$. Let $\widetilde\eta:[\rho,\sigma]\to[0,\infty)$ be given by
$$\widetilde\eta(r):=1-\biggl(\int_\rho^\sigma b(r)^{-1}\,dr\biggr)^{-1}\int_{\rho}^rb(r)^{-1}\,dr,\quad\mbox{where $b(r):=\int_{\partial B_r}|v|+\e$}.$$
Clearly, $\widetilde \eta\in W^{1,\infty}(\rho,\sigma)$ (since $b\geq\e>0$), $\widetilde \eta(\rho)=1$, $\widetilde \eta(\sigma)=0$, and thus
\begin{equation*}
J_{{\rm 1d},\e}\leq\int_{\rho}^\sigma \widetilde\eta'(r)^2b(r)\,dr=\biggl(\int_{\rho}^\sigma b(r)^{-1}\,dr\biggr)^{-1}.
\end{equation*}
The reverse inequality follows by H\"older's inequality. Next, we deduce \eqref{1dmin} from \eqref{1dmin:2}: For every $s>1$, we obtain by H\"older inequality $\sigma-\rho=\int_\rho^\sigma (\frac{b}{b})^\frac{s-1}s\leq\left(\int_\rho^\sigma b^{s-1}\right)^\frac1s\left(\int_\rho^\sigma \frac1{b}\right)^\frac{s-1}{s}$ with $b$ as above, and by \eqref{1dmin:2} that
\begin{equation*}
J_{{\rm 1d},\e}\leq (\sigma-\rho)^{-\frac{s}{s-1}}\biggl(\int_{\rho}^\sigma \left(\int_{\partial B_r}|v|+\e\right)^{s-1}\,dr\biggr)^{\frac{1}{s-1}}.
\end{equation*}
Sending $\e$ to zero, we obtain \eqref{1dmin} with $\delta=s-1>0$.
\end{proof}

\section{Higher integrability - Proof of Theorem~\ref{T:1}}\label{sec:proofT:1}

In this section, we prove the following  higher integrability and differentiability result which clearly contains Theorem~\ref{T:1}
\begin{theorem}\label{T:1b}
Let $\Omega\subset \R^n$, $n\geq2$, and suppose Assumption~\ref{ass:1} is satisfied with $2\leq p<q<\infty$ such that $\frac{q}p<1+\min\{\frac2{n-1},1\}$. Let $u\in W_{\rm loc}^{1,1}(\Omega,\R^N)$ be a local minimizer of the functional $\mathcal F$ given in \eqref{eq:int}. Then, $u\in W_{\rm loc}^{1,q}(\Omega,\R^N)$ and $|Du|^\frac{p-2}2Du\in W_{\rm loc}^{1,2}(\Omega,\R^{N\times n})$. Moreover, for  
\begin{equation}\label{def:chi}
\chi=\frac{n-1}{n-3}\quad\mbox{if $n\geq4$}\quad\chi\in(\frac1{2-\frac{q}p},\infty)\quad\mbox{if $n=3$ and}\quad \chi:=\infty\quad\mbox{if $n=2$}.
\end{equation}
there exists $c=c(\frac{L}\nu,n,N,p,q,\chi)\in[1,\infty)$ such that for every $B_{R}(x_0)\Subset \Omega$ 
\begin{equation}\label{est:T:1b2}%
\fint_{B_{\frac{R}2}(x_0)}|Du|^q\,dx+R^2\fint_{B_{\frac{R}2}(x_0)}|D(|Du|^\frac{p-2}2Du)|^2\,dx\leq c \biggl(\fint_{B_R(x_0)}1+f(Du)\,dx\biggr)^\frac{\alpha q}{p}
\end{equation} 
%
where 
\begin{equation}\label{def:alpha}
 \alpha:=\frac{1-\frac{q}{\chi p}}{2-\frac{q}p-\frac1\chi}.
\end{equation}

\end{theorem}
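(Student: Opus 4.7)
The plan follows the strategy announced in the introduction: first prove the quantitative estimate \eqref{est:T:1b2} as an a priori bound under the additional regularity assumption $v=u\in W^{1,q}_{\rm loc}(\Omega,\R^N)$, and then remove this assumption by the standard regularization and approximation procedure of \cite{ELM99}. Concretely, I would regularize $f$ to $f_\e(z):=f(z)+\e(1+|z|^2)^{q/2}$, minimize the corresponding functional in $B_R$ with boundary datum equal to a mollification of $u$ to obtain $u_\e\in W^{1,q}$, apply the a priori estimate to $u_\e$ with constants independent of $\e$, and finally pass to the limit $\e\to 0$ using lower semi-continuity together with the minimality of $u$. The core of the proof is therefore the a priori estimate.

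Setting $h:=|Dv|^{(p-2)/2}Dv$, I would combine Lemma~\ref{L:caccio:w1q} with Lemma~\ref{L:optimcutoff}. Taking $Q=0$ in \eqref{est:caccio} and bounding $(1+|Dv|^2)^{(q-2)/2}|Dv|^2\lesssim 1+|Dv|^q$ gives
\begin{equation*}
\int \eta^2|\nabla h|^2\dx\lesssim \int (1+|Dv|^q)|\nabla \eta|^2\dx,\qquad\eta\in C_c^1(\Omega).
\end{equation*}
For nested balls $B_\rho\subset B_\sigma\subset B_R\Subset\Omega$, optimizing over $\eta\in C_c^1(B_\sigma)$ with $\eta\equiv 1$ on $B_\rho$ via Lemma~\ref{L:optimcutoff} applied with the carefully chosen parameter $\delta=p/q\in(0,1]$ yields
\begin{equation*}
\int_{B_\rho}|\nabla h|^2\,dx\lesssim (\sigma-\rho)^{-(1+q/p)}\biggl(\int_\rho^\sigma\Bigl(\int_{\partial B_r}(1+|Dv|^q)\,d\mathcal H^{n-1}\Bigr)^{p/q}\,dr\biggr)^{q/p}.
\end{equation*}
The effect is to replace the usual ball integral of $|Dv|^q$ in the Caccioppoli estimate by an iterated integral whose inner integration is over $(n-1)$-dimensional spheres.

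The improvement then comes from Sobolev--Poincar\'e on each sphere $\partial B_r$. For $n\geq 4$ and $\chi=(n-1)/(n-3)$ one has the scale-invariant embedding
\begin{equation*}
\|h\|_{L^{2\chi}(\partial B_r)}^2\lesssim \int_{\partial B_r}|\nabla_\tau h|^2\,d\mathcal H^{n-1}+r^{-2}\int_{\partial B_r}|h|^2\,d\mathcal H^{n-1},
\end{equation*}
with analogous statements in the cases $n=3$ (any sufficiently large finite $\chi$, as in \eqref{def:chi}) and $n=2$ (Morrey embedding). Using $|h|^2=|Dv|^p$ and H\"older on $\partial B_r$, this controls $\bigl(\int_{\partial B_r}|Dv|^q\bigr)^{p/q}$ by $r^{(n-1)(p/q-1/\chi)}\bigl(\int_{\partial B_r}|\nabla h|^2+r^{-2}\int_{\partial B_r}|Dv|^p\bigr)$. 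Integrating in $r\in(\rho,\sigma)\subset(R/2,R)$ and substituting back into the previous display produces a Caccioppoli-type inequality whose RHS involves the very quantity $\int_{B_\sigma}|\nabla h|^2$ that the LHS aims to bound, but raised to the power $q/p$ and multiplied by an explicit negative power of $R$ and of $\sigma-\rho$.

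To close the estimate I would separate the $|\nabla h|^2$ and $|Dv|^p$ contributions on the RHS via Young's inequality, remove the gap between $\rho$ and $\sigma$ by Giusti's iteration lemma \cite[Lemma~6.1]{Giu}, and interpolate $\|Dv\|_{L^q}$ between $\|Dv\|_{L^p}$ and $\|h\|_{L^{2\chi}}^{2/p}$ provided by the sphere-Sobolev embedding; the condition $\frac{q}p<1+\frac{2}{n-1}$ is exactly the positivity of the denominator $2-\frac{q}p-\frac{1}{\chi}$ of \eqref{def:alpha} that makes this interpolation work, and the bookkeeping produces precisely the exponent $\alpha q/p$ of \eqref{est:T:1b2}. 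The main obstacle I anticipate is this final exponent alignment: the choice $\delta=p/q$ in Lemma~\ref{L:optimcutoff}, the sphere-Sobolev exponent $\chi$, and the interpolation exponent must dovetail so that the $|\nabla h|^2$-term can be absorbed into the LHS and so that the resulting power of the energy matches \eqref{def:alpha}. This is exactly where $n$ is replaced by $n-1$ relative to the classical argument of \cite{BF01,ELM99} based on Sobolev on balls.
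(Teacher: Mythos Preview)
Your overall architecture---Caccioppoli with $Q=0$, the optimized radial cutoff of Lemma~\ref{L:optimcutoff} with $\delta=p/q$, Sobolev on $(n-1)$-spheres, and the double approximation of \cite{ELM99}---coincides with the paper's.  The gap is in how you close the a~priori estimate.

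As you describe it, after the direct H\"older step $\|Dv\|_{L^q(\partial B_r)}^{p}\lesssim r^{(n-1)(p/q-1/\chi)}\|Dv\|_{L^{\chi p}(\partial B_r)}^{p}$ and sphere--Sobolev you land on an inequality of the shape
\[
\int_{B_\rho}|\nabla h|^2\ \lesssim\ (\sigma-\rho)^{-(1+q/p)}\Bigl(\int_{B_\sigma\setminus B_\rho}|\nabla h|^2+|Dv|^p\Bigr)^{q/p},
\]
so the dangerous term $\int_{B_\sigma}|\nabla h|^2$ enters with the \emph{superlinear} exponent $q/p>1$.  This cannot be absorbed by Young's inequality followed by Giusti's lemma: Young does not turn a power $>1$ into a linear term with coefficient $<1$, and any linearisation via an a~priori bound on $\int|\nabla h|^2$ drags the regularization parameter into the final constant.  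The interpolation you invoke at the end is the right idea, but it must be performed \emph{before} the sphere--Sobolev bound, not after.

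The paper's fix is to interpolate at the surface level, $\|Dv\|_{L^q(\partial B_r)}\le\|Dv\|_{L^p(\partial B_r)}^{\theta}\|Dv\|_{L^{\chi p}(\partial B_r)}^{1-\theta}$, and then to apply a \emph{second} H\"older inequality in the radial variable $r$ with the specific exponents $s=1/(1-\theta)$, $s'=1/\theta$.  This yields
\[
J(\rho,\sigma,(1+|Dv|)^q)\ \le\ (\sigma-\rho)^{-(1+q/p)}\Bigl(\int_{B_\sigma\setminus B_\rho}(1+|Dv|)^p\Bigr)^{a}\Bigl(\int_\rho^\sigma\|1+|Dv|\|_{L^{\chi p}(\partial B_r)}^{p}\,dr\Bigr)^{b},
\]
with $a=\tfrac{\chi}{\chi-1}(1-\tfrac{q}{\chi p})$ and $b=\tfrac{\chi}{\chi-1}(\tfrac{q}{p}-1)$.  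Only now is sphere--Sobolev applied to the second factor.  The hypothesis $\tfrac{q}{p}<1+\min\{\tfrac{2}{n-1},1\}$ is precisely $b<1$, so the resulting one-step estimate $J_{k-1}\le C\,4^{(1+q/p)k}E^{\,a}J_k^{\,b}$ (with $E=\int_{B_1}1+f_\lambda(Dv)$) is \emph{sublinear} in $J_k$ and can be iterated directly along $\rho_k=\tfrac34-4^{-(1+k)}$; since $b^k\to0$ and $\sup_kJ_k<\infty$ by the crude bound \eqref{est:1stepb}, the iteration converges with final exponent $a/(1-b)=\alpha$ and no residual dependence on the regularization parameter.  (Once $b<1$ is in hand, Young plus Giusti would also work and give the same $\alpha$; the problem is solely that your direct H\"older step produces $q/p>1$ instead of $b<1$.)
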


\begin{proof}[Proof of Theorem~\ref{T:1b}]

Without loss of generality, we suppose $\nu=1$ the general case $\nu>0$ follows by replacing $f$ with $f/\nu$. Throughout the proof, we write $\lesssim$ if $\leq$ holds up to a multiplicative constant depending only on $L,n,N,p$ and $q$.

Following, e.g., \cite{BF01,ELM99,ELM02}, we consider the perturbed integral functionals
\begin{equation}\label{def:Fsigma}
 \mathcal F_\lambda (w):=\int_\Omega f_\lambda(Dw)\,dx,\qquad \mbox{where}\quad f_\lambda(z):=f(z)+\lambda|z|^q\quad\mbox{with $\lambda\in(0,1)$.}
\end{equation}
We then derive suitable a priori higher differentiability and integrability estimates for local minimizers of $\mathcal F_\lambda$ that are independent of $\lambda\in(0,1)$. The claim then follows with help of a by now standard double approximation procedure in spirit of \cite{ELM99}.

\step 1 One-step improvement. 

Let $v\in W^{1,1}_{\rm loc}(\Omega,\R^N)$ be a local minimizer of the functional $\mathcal F_\lambda$ defined in \eqref{def:Fsigma}, $B_1\Subset\Omega$, and let $\chi>1$ be defined in \eqref{def:chi}. We claim that there exists $c=c(L,n,N,p,q,\chi)\in[1,\infty)$ such that for all $\frac12\leq\rho<\sigma\leq1$ and every $\lambda\in(0,1]$ 
\begin{align}\label{est:1step}
&\int_{B_1}1+f_\lambda(Dv)+\int_{B_\rho}|D(|Dv|^\frac{p-2}2Dv)|^2\,\,dx\notag\\
\leq& \frac{c\biggl(\int_{B_1}1+f_\lambda(Dv)\biggr)^{\frac{\chi}{\chi-1}(1-\frac{q}{\chi p})}}{(\sigma-\rho)^{1+\frac{q}p}}\biggl(\int_{B_1}1+f_\lambda(Dv)+\int_{B_{\sigma}}|D(|Dv|^\frac{p-2}2Dv)|^2\,\,dx\biggr)^{\frac{\chi}{\chi-1}(\frac{q}p-1)}
\end{align}
with the understanding $\frac{\infty}{\infty-1}=1$ and
\begin{align}\label{est:1stepb}
 \int_{B_\rho}|D(|Dv|^\frac{p-2}2Dv)|^2\,\,dx \lesssim \frac{1}{(\sigma-\rho)^2}\frac1\lambda\int_{B_{\sigma}}1+ f_\lambda(Dv)\,dx.
\end{align}
The growth conditions of $f_\lambda$ and the minimality of $v$ imply $v\in W^{1,q}_{\rm loc}(\Omega,\R^N)$ and thus by Lemma~\ref{L:caccio:w1q} 
\begin{equation}\label{est:caccio:1}
 \int_\Omega |D(|Dv|^\frac{p-2}2Dv)|^2\eta^2\,\,dx \lesssim \int_{\Omega}(1+|Dv|^2)^\frac{q-2}2 |D v|^2|\nabla \eta|^2\,dx\quad\mbox{for all $\eta\in C_c^1(\Omega)$.}
\end{equation}
Estimate \eqref{est:1stepb} follows directly from \eqref{est:caccio:1} for $\eta\in C_c^1(B_\sigma)$ with $0\leq \eta\leq1$, $\eta\equiv 1$ on $B_\rho$ and $|\nabla\eta|\leq\frac{2}{\sigma-\rho}$, combined with $|z|^q\leq \frac1\lambda f_\lambda(z)$ and $\lambda\in(0,1]$.

\smallskip

Hence, it is left to show \eqref{est:1step}. For this, we use a technical estimate which follows from Lemma~\ref{L:optimcutoff} and H\"olders inequality: For given $0<\rho<\sigma<\infty$ and $w\in L^q(B_{\sigma})$ it holds
\begin{equation}\label{est:aq}
 J(\rho,\sigma,|w|^q)\leq\frac{\biggl(\int_{B_{\sigma}\setminus B_\rho} |w|^p\biggr)^{\frac{\chi}{\chi-1}(1-\frac{q}{\chi p})}}{(\sigma-\rho)^{1+\frac{q}p}} \biggl(\int_{\rho}^{\sigma}\|w\|_{L^{\chi p}(\partial B_r)}^p\,dr\biggr)^{\frac{\chi}{\chi-1}(\frac{q}p-1)},
\end{equation}
where $J$ is defined as in Lemma~\ref{L:optimcutoff}. We postpone the derivation of \eqref{est:aq} to the end of this step. 

Combining \eqref{est:caccio:1} with $(1+|Dv|^2)^\frac{q-2}2 |D v|^2\leq (1+|Dv|)^q$ and estimate \eqref{est:aq} with $w=1+|Dv|$, we obtain
\begin{align}\label{est:caccio2}
 &\int_{B_\rho}|D(|Dv|^\frac{p-2}2Dv)|^2\,\,dx\notag\\ 
 \lesssim& \frac{\biggl(\int_{B_{\sigma}\setminus B_\rho}(1+|D v|)^p\,dx\biggr)^{\frac{\chi}{\chi-1}(1-\frac{q}{\chi p})}}{(\sigma-\rho)^{1+\frac{q}p}}\biggl(\int_{\rho}^{\sigma}\|1+|Dv|\|_{L^{\chi p}(\partial B_r)}^p\,dr\biggr)^{\frac{\chi}{\chi-1}(\frac{q}p-1)}.
\end{align}
Next, we use the Sobolev inequality on spheres to estimate the second factor on the right-hand side in \eqref{est:caccio2}: For $n\geq2$ there exists $c=c(n,N,\chi)\in[1,\infty)$ such that for all $r>0$
\begin{equation}\label{est:sobolev}
 \|D v\|_{L^{\chi p}(\partial B_r)}^p\leq c  r^{(n-1)(\frac1\chi-1)}\biggl(\int_{\partial B_r}|D v|^p\,d\mathcal H^{n-1}+r^2\int_{\partial B_r}|D(|Dv|^{\frac{p-2}2}Dv)|^2\,d\mathcal H^{n-1}\biggr).
\end{equation}
Combining \eqref{est:sobolev} with elementary estimates and assumption $\frac12\leq \rho<\sigma\leq1$, we obtain
\begin{align}\label{est:rhcaccio}
 \int_{\rho}^{\sigma}\|1+|Dv|\|_{L^{\chi p}(\partial B_r)}^p\,dr\lesssim&\int_{\rho}^{\sigma}1+\|Dv\|_{L^{\chi p}(\partial B_r)}^p\,dr\notag\\
 \lesssim&\int_{\rho}^{\sigma}1+\biggl(\int_{\partial B_r}|D v|^p+|D(|Dv|^{\frac{p-2}2}Dv)|^2\,d\mathcal H^{n-1}\biggr)\,dr\notag\\
 \lesssim&\int_{B_{\sigma}\setminus B_\rho}1+|D v|^p+|D(|Dv|^{\frac{p-2}2}Dv)|^2\,dx.
\end{align}

Combining \eqref{est:caccio2} and estimate \eqref{est:rhcaccio}, we obtain
\begin{align*}
 &\int_{B_\rho}|D(|Dv|^\frac{p-2}2Dv)|^2\,\,dx\\
 \leq& \frac{c\biggl(\int_{B_{1}}(1+|D v|)^p\,dx\biggr)^{\frac{\chi}{\chi-1}(1-\frac{q}{\chi p})}}{(\sigma-\rho)^{1+\frac{q}p}}\biggl(\int_{B_{\sigma}}1+|D v|^p+|D(|Dv|^{\frac{p-2}2}Dv)|^2\,dx\biggr)^{\frac{\chi}{\chi-1}(\frac{q}{p}-1)},
\end{align*}
The claimed estimate \eqref{est:1step} now follows since $|z|^p\leq f(z)\leq f_\lambda(z)$, $\frac{\chi}{\chi-1}(1-\frac{q}{\chi p}+\frac{q}{p}-1)=\frac{q}p\geq1$ and $\int_{B_1}1+f_\lambda(Dv)\,dx\geq|B_1|$. 

\smallskip

Finally, we present the computations regarding \eqref{est:aq}: Lemma~\ref{L:optimcutoff} yields 
\begin{align*}
J(\sigma,\rho,|w|^q)\leq \frac{\biggl(\int_{\rho}^{\sigma}\|w\|_{L^q(\partial B_r)}^{q\delta}\,dr\biggr)^\frac1\delta}{(\sigma-\rho)^{1+\frac1\delta}}\qquad\mbox{for every $\delta>0$.}
\end{align*}
 Using two times the H\"older inequality, we estimate 
\begin{align*}
\biggl(\int_{\rho}^{\sigma}\|w\|_{L^q(\partial B_r)}^{q\delta}\,dr\biggr)^\frac1\delta\leq& \biggl(\int_{\rho}^{\sigma}\|w\|_{L^p(\partial B_r)}^{\theta q\delta}\|w\|_{L^{\chi p}(\partial B_r)}^{(1-\theta) q\delta}\,dr\biggr)^\frac1\delta\quad\mbox{where $\frac{\theta}p+\frac{1-\theta}{\chi p}=\frac1q$}\\
\leq&\biggl(\int_{\rho}^{\sigma}\|w\|_{L^p(\partial B_r)}^{\theta q\delta\frac{s}{s-1}}\,dr\biggr)^\frac{s-1}{s\delta}\biggl(\int_{\rho}^{\sigma}\|w\|_{L^{\chi p}(\partial B_r)}^{(1-\theta) q\delta s}\,dr\biggr)^\frac1{\delta s}\quad\mbox{for every $s>1$.}
\end{align*}
Inequality \eqref{est:aq} follows with the admissible choice
\begin{equation*}
\delta=\frac{p}q\quad\mbox{and}\quad  s=\frac1{1-\theta}\qquad\biggl(\mbox{recall }1-\theta=\frac{\frac1p-\frac1q}{\frac1p-\frac1{\chi p}}\quad \mbox{and $p<q$}\biggr)
\end{equation*}
which ensures $\theta q\delta\frac{s}{s-1}=(1-\theta) q\delta s=p$.

\step 2 Iteration.

We claim that there exists $c=c(L,n,N,p,q,\chi)\in[1,\infty)$ such that
\begin{align}\label{est:key}
\int_{B_\frac12}|Dv|^p+|D(|Dv|^\frac{p-2}2Dv)|^2\,\,dx\leq& c\biggl(\int_{B_1}1+f_\lambda(Dv)\,dx\biggr)^\alpha,
\end{align}
where $\alpha$ is defined in \eqref{def:alpha}. For $k\in\mathbb N\cup\{0\}$, we set 
\begin{equation*}
\rho_k=\frac34- \frac{1}{4^{1+k}}\quad\mbox{and}\quad J_k:=\int_{B_1}1+f_\lambda(Dv)+\int_{B_{\rho_k}}|D(|Dv|^\frac{p-2}2Dv)|^2\,\,dx.
\end{equation*}
Estimate \eqref{est:1stepb} and the choice of $\rho_k$ imply for $\lambda\in(0,1]$
\begin{equation}\label{est:Jkbounded}
 \sup_{k\in\mathbb N}J_k\leq \int_{B_1}1+f_\lambda(Dv)+\int_{B_{\frac34}}|D(|Dv|^\frac{p-2}2Dv)|^2\,\,dx\lesssim \frac1\lambda \int_{B_1}1+f_\lambda(Dv)\,dx<\infty.
\end{equation}
From \eqref{est:1step} we deduce the existence of $c=c(L,n,N,p,q,\chi)\in[1,\infty)$ such that for every $k\in\mathbb N$
\begin{equation}\label{est:refine:iterate}
 J_{k-1}\leq c 4^{(1+\frac{q}p) k}\biggl(\int_{B_1}1+f_\lambda(Dv)\biggr)^{\frac{\chi}{\chi-1}(1-\frac{q}{\chi p})}J_k^{\frac{\chi}{\chi-1}\frac{q-p}p}.
\end{equation}
Assumption $\frac{q}p<1+\min\{1,\frac{2}{n-1}\}$ and the choice of $\chi$ yield 
\begin{equation*}
{\frac{\chi}{\chi-1}\frac{q-p}p}\stackrel{\eqref{def:chi}}{=}\begin{cases}\frac{q}p-1&\mbox{if $n=2$}\\
{\frac{\chi}{\chi-1}\frac{q-p}p}&\mbox{if $n=3$}\\
\frac{n-1}2(\frac{q}p-1)&\mbox{if $n\geq4$}\end{cases}<1,
\end{equation*}
where we use for $n=3$ that $\chi\stackrel{\eqref{def:chi}}{>}\frac1{2-\frac{q}p}>0$ and
\begin{equation*}
\frac{\chi}{\chi-1}\frac{q-p}p<1\quad\Leftrightarrow\quad \frac{q-p}p<1-\frac1\chi\quad \Leftrightarrow\quad \frac1\chi<2-\frac{q}p.
\end{equation*}
Hence, iterating \eqref{est:refine:iterate} we obtain (using the uniform bound \eqref{est:Jkbounded} on $J_k$ and ${\frac{\chi}{\chi-1}\frac{q-p}p}<1$)
\begin{eqnarray}\label{est:moser:almostfinal}
& &\int_{B_{\frac12}}|Dv|^p+|D(|Dv|^\frac{p-2}2Dv)|^2\,\,dx\leq J_0\lesssim \biggl(\int_{B_1}1+f_\lambda(Dv)\biggr)^{\frac{\chi}{\chi-1}(1-\frac{q}{\chi p})\sum_{k=0}^\infty ({\frac{\chi}{\chi-1}\frac{q-p}p})^k}
\end{eqnarray}
and the claimed estimate \eqref{est:key} follow from
\begin{equation*}
 \alpha = \frac{\chi}{\chi-1}(1-\frac{q}{\chi p})\sum_{k=0}^\infty ({\frac{\chi}{\chi-1}\frac{q-p}p})^k.
\end{equation*}

\step 3 Conclusion.

We assume $B_1\Subset \Omega$ and show that there exists $c=c(L,n,N,p,q,\chi)\in[1,\infty)$
\begin{equation}\label{est:reduceT2b}
 \int_{B_\frac18}|D u|^{q}\,dx\leq c\left(\int_{B_1}1+f(D u)\,dx\right)^{\frac{\alpha q}p},
\end{equation}
where $\alpha$ is given as in \eqref{def:alpha} above. Clearly, standard scaling, translation and covering arguments yield
$$
\fint_{B_{\frac{R}2}(x_0)}|Du|^q\,dx\leq c \biggl(\fint_{B_R(x_0)}1+f(Du)\,dx\biggr)^\frac{\alpha q}{p}
$$
for all $B_R(x_0)\Subset \Omega$ and $c=c(L,n,N,p,q,\chi)\in[1,\infty)$. The claimed estimate \eqref{est:T:1b2} then follows from Lemma~\ref{L:caccio:w1q}.

\smallskip

Following \cite{ELM99}, we introduce in addition to $\lambda\in(0,1)$ a second small parameter $\e>0$ which is related to a suitable regularization of $u$. For $\e\in(0,\e_0)$, where $0<\e_0\leq1$ is such that $B_{1+\e_0}\Subset\Omega$, we set $u_\e:=u*\varphi_\e$ with $\varphi_\e:=\e^{-n}\varphi(\frac{\cdot}\e)$ and $\varphi$ being a non-negative, radially symmetric mollifier, i.e. it satisfies
$$
\varphi\geq0,\quad {\rm supp}\; \varphi\subset B_1,\quad \int_{\R^n}\varphi(x)\,dx=1,\quad \varphi(\cdot)=\widetilde \varphi(|\cdot|)\quad \mbox{for some $\widetilde\varphi\in C^\infty(\R)$}.
$$
Given $\e,\lambda\in(0,\e_0)$, we denote by $v_{\e,\lambda}\in u_\e+W_0^{1,q}(B_1)$ the unique function satisfying
\begin{equation}\label{eq:defvesigma}
\int_{B_1}f_\lambda(D v_{\e,\lambda})\,dx\leq \int_{B_1}f_\lambda(D v)\,dx\qquad\mbox{for all $v\in u_\e+W_0^{1,q}(B_1)$}.
\end{equation}
Combining Sobolev inequality with the assumption $\frac{q}p<1+\frac{2}{n-2}$ and estimate~\eqref{est:key}, we have 
\begin{eqnarray}\label{est:T2b1}
\biggl(\int_{B_\frac18}|D v_{\e,\lambda}|^{q}\,dx\biggr)^\frac{p}{q}&\lesssim&\int_{B_\frac18}|Dv_{\e,\lambda}|^p+|D(|Dv_{\e,\lambda}|^\frac{p-2}2Dv_{\e,\lambda})|^2\,\,dx\notag\\
&\stackrel{\eqref{est:key}}{\lesssim}&\biggl(\int_{B_1}1+f_\lambda(Dv_{\e,\lambda})\,dx\biggr)^\alpha\notag\\
&\stackrel{\eqref{def:Fsigma},\eqref{eq:defvesigma}}{\leq}&\left(\int_{B_{1}}1+f(D u_\e)+\lambda|D u_\e|^q\,dx\right)^\alpha\notag\\
&\leq&\left(|B_1|+\int_{B_{1+\e}}f(D u)\,dx+\lambda\int_{B_1}|D u_\e|^q\,dx\right)^\alpha,
\end{eqnarray} 
where we used Jensen's inequality and the convexity of $f$ in the last step. Similarly,
\begin{align}\label{est:T2b2}
\int_{B_1}|D v_{\e,\lambda}|^p\,dx\stackrel{\eqref{ass}}{\leq}& \int_{B_1}f(D v_{\e,\lambda})\,dx\stackrel{\eqref{def:Fsigma}\eqref{eq:defvesigma}}{\leq} \int_{B_1}f(D u_\e)+\lambda|D u_\e|^q\,dx\notag\\
\leq&\int_{B_{1+\e}}f(D u)\,dx+\lambda\int_{B_1}|D u_\e|^q\,dx.
\end{align}
Fix $\e\in(0,\e_0)$. In view of \eqref{est:T2b1} and \eqref{est:T2b2}, we find $w_\e\in u_\e+W_0^{1,p}(B_1)$ such that as $\lambda\to0$, up to subsequence, 
\begin{align*}
v_{\e,\lambda}\rightharpoonup w_\e\qquad\mbox{weakly in $W^{1,p}(B_1)$},\\
D v_{\e,\lambda}\rightharpoonup Dw_\e\qquad\mbox{weakly in $L^{q}(B_\frac{1}8)$}.
\end{align*}
Hence, a combination of \eqref{est:T2b1}, \eqref{est:T2b2} with the weak lower-semicontinuity of convex functionals yield
\begin{align}
 \|D w_\e\|_{L^{q}(B_\frac{1}8)}\leq&\liminf_{\lambda\to0}\|D v_{\e,\lambda}\|_{L^{\kappa p}(B_\frac{1}8)}\lesssim\left(\int_{B_{1+\e}}f(D u)\,dx+1\right)^\frac{\alpha}p\label{est:T2b3}\\
\int_{B_1}|D w_\e|^p\,dx\leq&\int_{B_{1}}f(D w_\e)\,dx\leq\int_{B_{1+\e}}f(D u)\,dx.\label{est:T2b4}
\end{align}
Since $w_\e\in u_\e+W_0^{1,q}(B_1)$ and $u_\e\to u$ in $W^{1,p}(B_1)$, we find by \eqref{est:T2b4} a function  $w\in u+W_0^{1,p}(B_1)$ such that, up to subsequence,
\begin{equation*}
D w_{\e}\rightharpoonup D w\quad\mbox{weakly in $L^p(B_1)$}.
\end{equation*}
Appealing to the bounds \eqref{est:T2b3}, \eqref{est:T2b4} and lower semicontinuity, we obtain
\begin{align}
 \|D w\|_{L^{q}(B_\frac{1}8)}\lesssim&\left(\int_{B_{1}}f(D u)\,dx+1\right)^\frac{\alpha}p\label{est:T2b5}\\
\int_{B_{1}}f(D w)\,dx\leq&\int_{B_{1}}f(D u)\,dx.\label{est:T2b6}
\end{align}
Inequality \eqref{est:T2b6}, strict convexity of $f$ and the fact $w\in u+W_0^{1,p}(B_1)$ imply $w=u$ and thus the claimed estimate \eqref{est:reduceT2b} is a consequence of \eqref{est:T2b5}.

\end{proof}


\section{Partial regularity - Proof of Theorem~\ref{T:2}}\label{sec:Leps}

Theorem~\ref{T:2} follows from, the higher integrability statement Theorem~\ref{T:1}, the $\varepsilon$-regularity statement of Lemma~\ref{L:epsreg} below and a well-known iteration argument. 
\begin{lemma}\label{L:epsreg}
Let $\Omega\subset \R^n$, $n\geq3$, and suppose Assumption~\ref{ass:22} is satisfied with $2\leq p<q<\infty$ such that $\frac{q}p<1+\frac2{n-1}$.  Fix $M>0$. There exists $C^*=C^*(n,N,p,q,\frac{L}\nu,M)\in[1,\infty)$ such that for every $\tau\in(0,\frac14)$ there exists $\e=\e(M,\tau)>0$ such that the following is true: Let $u\in W_{\rm loc}^{1,1}(\Omega,\R^N)$ be a local minimizer of the functional $\mathcal F$ given in \eqref{eq:int}. Suppose for some ball $B_r(x)\Subset \Omega$
\begin{equation*}
|(Du)_{x,r}|\leq M,
\end{equation*}
where we use the shorthand $(w)_{x,r}:=\fint_{B_r(x)}w\,dy$, and
\begin{equation*}
 \quad E(x,r):=\fint_{B_r(x)}|Du-(Du)_{x,r}|^2\,dy+\fint_{B_r(x)}|Du-(Du)_{x,r}|^q\,dy\leq \e,
\end{equation*}
then
\begin{equation*}
 E(x,\tau r)\leq C^*\tau^2E(x,r).
\end{equation*}

\end{lemma}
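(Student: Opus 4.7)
The plan is to establish Lemma~\ref{L:epsreg} by the classical blow-up/linearization (harmonic approximation) scheme, following \cite{BF01} but using Theorem~\ref{T:1b} as the crucial new input that allows the scheme to close under the weaker gap \eqref{eq:assq2}. Suppose for contradiction that the conclusion fails. Then for some $\tau\in(0,\tfrac14)$ one can extract minimizers $u_j$ and balls $B_{r_j}(x_j)\Subset\Omega$ with $Q_j:=(Du_j)_{x_j,r_j}$ satisfying $|Q_j|\leq M$, $E_j:=E(x_j,r_j)\to 0$, and $E(x_j,\tau r_j)>C^*\tau^2 E_j$ for every preassigned $C^*$. Setting $\lambda_j:=\sqrt{E_j}$ and
\[
v_j(y):=\frac{u_j(x_j+r_j y)-(u_j)_{x_j,r_j}-r_j Q_j y}{\lambda_j r_j},\qquad y\in B_1,
\]
each $v_j$ minimizes the shifted and rescaled integrand $f_j(z):=\lambda_j^{-2}[f(Q_j+\lambda_j z)-f(Q_j)-\lambda_j\langle\partial f(Q_j),z\rangle]$, satisfies $(Dv_j)_{B_1}=0$, and enjoys the normalization $\fint_{B_1}|Dv_j|^2+\lambda_j^{q-2}\fint_{B_1}|Dv_j|^q\leq 1$. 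After passing to a subsequence, $Q_j\to Q_\infty$ with $|Q_\infty|\leq M$ and $v_j\rightharpoonup v_\infty$ weakly in $W^{1,2}(B_1)$.

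The next step (harmonic approximation) is to show that $v_\infty$ is a weak solution of the constant-coefficient linear system $\operatorname{div}(\mathbb A\,Dv_\infty)=0$ in $B_1$ with $\mathbb A:=\partial^2 f(Q_\infty)$. Rewriting the Euler-Lagrange equation of $u_j$ in terms of $v_j$ and expanding $\partial f(Q_j+\lambda_j Dv_j)$ to first order around $Q_j$, the principal part produces $\int_{B_1}\mathbb A_j Dv_j:D\varphi\,dy$ while the Taylor remainder is $O(\lambda_j|Dv_j|^2)$ on $\{\lambda_j|Dv_j|\leq 1\}$ and is dominated by $\lambda_j^{q-2}|Dv_j|^q$ on the complementary set. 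Theorem~\ref{T:1b} applied to $u_j$ on $B_{r_j/2}(x_j)$ and rescaled to $B_{1/2}$ yields a uniform-in-$j$ bound on $\fint_{B_{1/2}}|Dv_j|^{\kappa p}$ with $\kappa p>q$ (note that \eqref{eq:assq2} implies $\tfrac{q}{p}<\tfrac{n}{n-2}$), which upgrades the $L^q$ control to uniform equi-integrability of $\lambda_j^{q-2}|Dv_j|^q$ on interior balls; hence the remainder vanishes in the limit and $v_\infty$ solves the advertised system.

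Since $\mathbb A$ is Legendre-Hadamard elliptic with ellipticity constants depending only on $M,\nu,L,p$, elliptic regularity gives $v_\infty\in C^\infty_{\rm loc}(B_1)$ and the standard Campanato decay $\fint_{B_{2\tau}}|Dv_\infty-(Dv_\infty)_{2\tau}|^2\,dy\leq c(M)\tau^2$. To transfer this estimate back to $v_j$, one first obtains strong convergence $Dv_j\to Dv_\infty$ in $L^2_{\rm loc}(B_1)$ from the uniform Caccioppoli inequality of Lemma~\ref{L:caccio:w1q2} combined with a standard variational comparison; the $L^q$ part of the excess on $B_{\tau r_j}(x_j)$ is then absorbed into the $L^2$ part by a second application of Theorem~\ref{T:1b}, which produces $E(x_j,\tau r_j)\leq c(M)\tau^2 E_j$ for $j$ large and contradicts the assumed lower bound as soon as $C^*>c(M)$. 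The main obstacle is precisely this last point: under the scaling that normalizes the $L^2$ part of the excess, the rescaled $L^q$ norm a priori diverges at rate $\lambda_j^{2-q}$, and Theorem~\ref{T:1b} is what compensates for this both in the harmonic-approximation step (equi-integrability of the nonlinear Taylor remainder) and in the final absorption of the $L^q$ excess by the $L^2$ excess.
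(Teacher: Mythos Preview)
Your blow-up/linearization architecture matches the paper's, and you correctly locate the decisive step at the strong $L^q$ convergence, i.e., showing $\lambda_j^{q-2}\fint_{B_\tau}|Dv_j-(Dv_j)_{0,\tau}|^q\to 0$. The gap is in the mechanism you propose for it. The claim that Theorem~\ref{T:1b}, applied to $u_j$ on $B_{r_j/2}(x_j)$ and rescaled, yields a \emph{uniform-in-$j$} bound on $\fint_{B_{1/2}}|Dv_j|^{\kappa p}$ is not correct: Theorem~\ref{T:1b} controls $|Du_j|$ (not $|Du_j-Q_j|$) by a power of $\fint_{B_{r_j}}(1+f(Du_j))$, a quantity carrying no smallness in $\lambda_j$; after dividing by $\lambda_j$ the left-hand side acquires a divergent factor $\lambda_j^{-\kappa p}$ and nothing on the right compensates. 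The same objection applies to your ``second application of Theorem~\ref{T:1b}'' for absorbing the $L^q$ excess at scale $\tau r_j$.

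What does survive the rescaling is the Caccioppoli inequality of Lemma~\ref{L:caccio:w1q2}, because its right-hand side contains the factor $|Du-Q|^2$ and one is free to choose $Q=A_m$. The paper (following \cite{BF01}) exploits this through the auxiliary function
\[
\psi_m(z):=\lambda_m^{-1}\bigl[(1+|A_m+\lambda_mDv_m(z)|^2)^{p/4}-(1+|A_m|^2)^{p/4}\bigr].
\]
Lemma~\ref{L:caccio:w1q2} with $Q=A_m$ gives $\|\psi_m\|_{W^{1,2}(B_\rho)}\le c(\rho)$ uniformly in $m$ (the $\lambda_m^{-2}$ from the definition cancels against $|Du-A_m|^2=\lambda_m^2|Dv_m|^2$), hence $\psi_m$ is bounded in $L^{2n/(n-2)}(B_\rho)$ by Sobolev embedding. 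On the set $\{\lambda_m|Dv_m|>T\}$ one has a pointwise comparison $\lambda_m^{q-2}|Dv_m|^q\lesssim \lambda_m^{\gamma}\,|\psi_m|^{2q/p}$ for some $\gamma>0$, and since $\tfrac{q}{p}<1+\tfrac{2}{n-1}$ forces $\tfrac{2q}{p}<\tfrac{2n}{n-2}$, the integral on the right stays bounded while the prefactor drives the left-hand side to zero. Thus Theorem~\ref{T:1} enters only to guarantee $u\in W^{1,q}_{\rm loc}$, making $E$ well-defined and Lemma~\ref{L:caccio:w1q2} applicable; the improved gap is ultimately used at the Sobolev exponent comparison $\tfrac{2q}{p}<\tfrac{2n}{n-2}$, not through a direct $L^{\kappa p}$ bound on $Dv_j$.
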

With the higher integrability of Theorem~\ref{T:1b} and the Caccioppoli inequality of Lemma~\ref{L:caccio:w1q2} at hand, we can prove Lemma~\ref{L:epsreg} following almost verbatim the proof of the corresponding result \cite[Lemma 4.1]{BF01}, which contain the statement of Lemma~\ref{L:epsreg} under the assumption $\frac{q}p<1+\frac2{n}$ (note that in \cite{BF01} somewhat more general growth conditions including also the case $1<p<q$ are considered). Thus, we only sketch the argument.

\begin{proof}[Proof of Lemma~\ref{L:epsreg}]

Fix $M>0$. Suppose that Lemma~\ref{L:epsreg} is wrong. Then there exists $\tau\in(0,\frac14)$, a local minimizer $u\in W^{1,1}_{\rm loc}(\Omega,\R^N)$, which in view of Theorem~\ref{T:1} satisfies $u\in W^{1,q}_{\rm loc}(\Omega,\R^N)$, and a  sequence of balls $B_{r_m}(x_m)\Subset B_R$ satisfying
\begin{eqnarray}
 & &|(Du)_{x_m,r_m}|\leq M,\quad E(x_m,r_m)=:\lambda_m\quad\mbox{with}\quad \lim_{m\to\infty}\lambda_m=0,\label{Leps:ass1}\\
 & & E(x_m,\tau r_m)>C^*\tau^2\lambda_m^2,\label{Leps:ass2}
\end{eqnarray}
where $C^*$ is chosen below. We consider the sequence of rescaled functions given by
\begin{align*}
 &v_m(z):=\frac1{\lambda_m r_m}(u(x_m+r_mz)-a_m-r_mA_mz),
\end{align*}
where $a_m:=(u)_{x_m,r_m}$ and $A_m:=(Du)_{x_m,r_m}$. Assumption \eqref{Leps:ass1} implies $\sup_m|A_m|\leq M$ and thus, up to subsequence,
\begin{equation*}
A_m\to A\in\R^{N\times n}\label{Leps:limit0}.
\end{equation*}
The definition of $v_m$ yields
\begin{equation}\label{Leps:ass0a}
 Dv_m(z)=\lambda_m^{-1}(Du(x_m+r_mz)-A_m),\quad (v_m)_{0,1}=0,\quad (Dv_m)_{0,1}=0
\end{equation}
Assumptions \eqref{Leps:ass1} and \eqref{Leps:ass2} imply
\begin{align}
 &\fint_{B_1}|Dv_m|^2\,dz+\lambda_m^{q-2}\fint_{B_1}|Dv_m|^q\,dz=\lambda_m^{-1}E(x_m,r_m)=1\label{Leps:ass1a},\\
 &\fint_{B_\tau}|Dv_m-(Dv_m)_{0,\tau}|^2\,dz+\lambda_m^{q-2}\fint_{B_\tau}|Dv_m-(Dv_m)_{0,\tau}|^q\,dz=\lambda_m^{-1}E(x_m,\tau r_m)>C_*\tau^2.\label{Leps:ass1b}
\end{align}
The bound \eqref{Leps:ass1a} together with \eqref{Leps:ass0a} imply the existence of $v\in W^{1,2}(B_1,\R^N)$ such that, up to extracting a further subsequence,
\begin{eqnarray*}
 & v_m\rightharpoonup v&\mbox{in $W^{1,2}(B_1,\R^{ N})$,}\\
 &\lambda_m Dv_m\to0&\mbox{in $L^2(B_1,\R^{N\times n})$ and almost everywhere}\\
 &\lambda_m^{1-\frac2q}v_m\rightharpoonup 0\quad&\mbox{in $W^{1,q}(B_1,\R^N)$. }
\end{eqnarray*}
The function $v$ satisfies the linear equation with constant coefficients
\begin{equation*}
 \int_{B_1}\langle\partial^2 f(A) Dv,D\varphi\rangle\,dz=0\qquad\mbox{for all $\varphi\in C^1_0(B_1)$,}
\end{equation*}
see, e.g., \cite{Evans86} or \cite[Proposition 4.2]{BF01}. Standard estimates for linear elliptic systems with constant coefficients imply $v\in C_{\rm loc}^\infty(B_1,\R^N)$ and existence of $C^{**}<\infty$ depending only on $n,N$ and the ellipticity contrast of $\partial^2 f(A)$ (and thus on $\frac{L}\nu,p,q,$ and $M$) such that
\begin{equation}\label{epsreg:s2:est}
 \fint_{B_\tau}|Dv-(Dv)_{0,\tau}|^2\leq C^{**}\tau^2.
\end{equation}
Choosing $C^*=2C^{**}$ we obtain a contradiction between \eqref{Leps:ass1b} and \eqref{epsreg:s2:est} provided we have as $m\to\infty$
\begin{eqnarray}
& Dv_m\to Dv\quad&\mbox{in $L^2_{\rm loc}(B_1)$},\label{claim:strong:1}\\
& \lambda_m^{1-\frac2q}Dv_m\to0\quad&\mbox{in $L^q_{\rm loc}(B_1)$.}\label{claim:strong:2}
\end{eqnarray}
Exanctly as in \cite[Proposition 4.3]{BF01} (with $\mu=2-p$, see also \cite[Section~3.4.3.2]{B03} for a more detailed presentation of the proof), we have for all $\rho\in(0,1)$,
\begin{equation}\label{claim:convergence:wm}
 \lim_{m\to\infty}\int_{B_\rho}\int_0^1(1-s)\biggl(1+|A_m+\lambda_m(Dv+sDw_m)|^2\biggr)^\frac{p-2}2|Dw_m|^2\,dz=0,
\end{equation}
where $w:=v_m-v$, and thus the local $L^2$-convergence \eqref{claim:strong:1} follows. It is left to prove \eqref{claim:strong:2}. For this, we introduce for $\rho\in(0,1)$ and $T>0$ the sequence of subsets
\begin{equation*}
 U_m:=U_m(\rho,T):=\{\,z\in B_\rho\,:\,\lambda_m|Dv_m|\leq T\,\}.
\end{equation*}
The local Lipschitz regularity of $v$, $q>2$ and \eqref{claim:strong:1} imply for all $\rho\in(0,1)$ and $T>0$
\begin{align*}
\limsup_{m\to\infty}\int_{U_m(\rho,T)}\lambda_m^{q-2}|Dv_m|^q\,dz\lesssim& \limsup_{m\to\infty}\int_{U_m(\rho,T)}\lambda_m^{q-2}|Dw_m|^q\,dz\\
\lesssim&\limsup_{m\to\infty}\int_{B_\rho}(M^{q-2}+\lambda_m^{q-2}|Dv|^{q-2})|Dw_m|^2\,dz=0,
\end{align*}
where here and for the rest of the proof $\lesssim$ means $\leq$ up to a multiplicative constant depending only on $L,n,N,p$ and $q$.
Hence, it is left to show that there exists $T>0$ such that
\begin{equation*}
 \limsup_{m\to\infty}\int_{B_\rho\setminus U_m(\rho,T)}\lambda_m^{q-2}|Dv_m|^q\,dz\leq0\quad\mbox{for all $\rho\in(0,1)$}.
\end{equation*}
As in \cite{BF01}, we introduce a sequence of auxiliary functions
\begin{equation*}
\psi_m:=\lambda_m^{-1}\biggl[(1+|A_m+\lambda_m Dv_m|^2)^\frac{p}4-(1+|A_m|^2)^{\frac{p}4}\biggr],
\end{equation*}
which satisfy
\begin{equation}\label{est:psiw12}
\limsup_{m\to\infty}\|\psi_m\|_{W^{1,2}(B_\rho)}\lesssim c(\rho)\in[1,\infty)\qquad\mbox{for all $\rho\in(0,1)$}.
\end{equation}
Indeed, by Theorem~\ref{T:1} and Lemma~\ref{L:caccio:w1q2}, we have for every $\rho\in(0,1)$ and every $Q\in\mathbb R^{N\times n}$
\begin{equation*}
 \int_{B_{\rho r_m}(x_m)}|\nabla (1+|D u(x)|^2)^\frac{p}4|^2\,dx\lesssim r_m^{-2}c(\rho)\int_{B_{r_m}(x_m)}(1+|\nabla u(x)|)^{q-2}|Du(x)-Q|^2\,dx
\end{equation*} 
and thus by rescaling and setting $Q=A_m$
\begin{equation*}
 \int_{B_\rho}|\nabla \psi_m|^2\,dz\lesssim c(\rho)\int_{B_1}(1+|A|^{q-2}+|\lambda_m Dv_m|^{q-2}))|Dv_m|^2\,dz\stackrel{\eqref{Leps:ass1a}}{\lesssim} c(\rho)(1+M^{q-2}).
\end{equation*} 
The identity $\psi_m=\lambda_m^{-1}\int_0^1\frac{d}{dt}\Theta(A_m+t\lambda_mv_m)\,dt$ with $\Theta(F):=(1+|F|^2)^\frac{p}4$ implies
\begin{equation*}
 |\psi_m|\leq c(|Dv_m|+\lambda_m^{\frac{p-2}2}|Dv_m|^{\frac{p}2})
\end{equation*}
(see \cite[p.\ 555]{BF01} for details) and thus with help of \eqref{claim:convergence:wm}, we obtain
\begin{equation*}
 \limsup_{m\to\infty}\int_{B_\rho}|\psi_m|^2\,dz\lesssim c(\rho).
\end{equation*}
For $T$ sufficiently large (depending on $M$) there exists $c>0$ such that for all $z\in B_\rho\setminus U_m(\rho,T)$
\begin{align*}
 \psi_m(z)\geq c\lambda_m^{-1}\lambda_m^{\frac{p}2}|Dv_m(z)|^\frac{p}2\quad\mbox{ and thus }\quad\lambda_m^{2(1+\frac{q}p)}\psi_m^{\frac{2q}p}(z)\geq c^{\frac{2q}p}\lambda_m^{q-2}|Dv_m(z)|^q
\end{align*}
Estimate \eqref{est:psiw12} and Sobolev embedding imply $\limsup_{m\to\infty}\|\psi_m\|_{L^{\frac{2n}{n-2}}(B_\rho)}\lesssim c(\rho)\in[1,\infty)$. Hence, using assumption $\frac{q}p<1+\frac{2}{n-1}$ (and thus $\frac{2q}p<\frac{2n}{n-2}$), we obtain for every $\rho\in(0,1)$
\begin{align*}
\limsup_{m\to\infty}\int_{B_\rho\setminus U_m(\rho,T)}\lambda_m^{q-2}|Dv_m|^q\,dz\lesssim \lambda_m^{2(1+\frac{q}p)}\int_{B_\rho}\psi_m^{\frac{2q}p}(z)\,dz\lesssim c(\rho)\limsup_{m\to\infty}\lambda_m^{2(1+\frac{q}p)}=0,
\end{align*}
which finishes the proof.

\end{proof}
%
%
%


\end{document}